\theoremstyle{definition}
\newtheorem{theorem}{Theorem}[section]
\newtheorem{lemma}[theorem]{Lemma}
\newtheorem{proposition}[theorem]{Proposition}
\theoremstyle{definition}
\theoremstyle{definition}
\newtheorem{remark}[theorem]{Remark}
\newtheorem{example}[theorem]{Example}
\begin{document}
	\baselineskip=16.5pt
	\title[]{ involutions on the product of  projective space  and  sphere}
	\author[Dimpi and Hemant Kumar Singh]{ Dimpi and Hemant Kumar Singh}
	\address{ Dimpi \newline 
		\indent Department of Mathematics\indent \newline\indent University of Delhi\newline\indent 
		Delhi -- 110007, India.}
	\email{dimpipaul2@gmail.com}
	\address{  Hemant Kumar Singh\newline\indent 
		Department of Mathematics\newline\indent University of Delhi\newline\indent 
		Delhi -- 110007, India.}
	\email{hemantksingh@maths.du.ac.in}

	\date{}
	\thanks{The first author of the paper is  supported by SRF of UGC, New Delhi, with  reference no.: 201610039267.}
	\begin{abstract} 
		
		Let $G=\mathbb{Z}_2$ act on a  finite  CW-complex $X$ having mod $2$  cohomology isomorphic to the product of  projective space and sphere $\mathbb{F}P^n\times \mathbb{S}^m,$ where    $\mathbb{F}=\mathbb{R}$  or $\mathbb{C}.$ In this  paper, we have  determined the connected  fixed point sets and the orbit spaces of free involutions  on $X.$ As an application, we derive the Borsuk-Ulam type results.	
	\end{abstract}
	\subjclass[2020]{Primary 57S17; Secondary 55M35}
	
	\keywords{Fixed Point Sets; Orbit Spaces;  Fibration; Totally nonhomologous to zero; Leray-Serre spectral sequence.}

	\maketitle
	\section {Introduction}
	\noindent	Let $G$ be a compact Lie group  acting on a finite CW-complex $X$ with the fixed point set $F.$ The study of  fixed point sets has attracted many mathematicians over the world, since  the beginning of the twentieth century. Smith \cite{s} proved that the fixed point sets of $G=\mathbb{Z}_p,$ $p$ a prime, on a finite  dimensional polyhedron $X$ having mod $p$ cohomology $n$-sphere (resp. $n$-disc) are  mod $p$  cohomology $r$-sphere (resp. $r$-disc). He has also proved that if $G=\mathbb{Z}_2$ acts effectively on the real projective space then the fixed point set is either empty or it has two components having mod $2$ cohomology real projective space \cite{smith}.  Bredon \cite{ Bredon} generalizes this result for $G=\mathbb{Z}_p,~ p$ a prime, actions on  cohomology projective spaces. Su also proved  similar results for (i) $G=\mathbb{Z}_p,$ $p$ odd prime actions on mod $p$ cohomology lens spaces, and $G= \mathbb{S}^1$ actions on cohomology complex projective spaces \cite{j2}, (ii) $G=\mathbb{Z}_p, p$ a prime, actions on a space $X$ with mod $p$ cohomology  product of spheres $\mathbb{S}^m \times \mathbb{S}^n$ \cite{j1}.   Bredon \cite{Bredon} proved a more general result. He proved  that if a finitistic space $X$  satisfies poincar\'{e} duality with respect to  \v{C}ech cohomology with $\mathbb{Z}_p$-coefficients then each component of the fixed point set also satisfies poincar\'{e}  duality.  Puppe \cite{puppe} proved Bredon's  conjecture. He has  shown that  if $X$ is totally nonhomologous to zero in $X_G$ (Borel space) then the number of generators of the cohomology ring of each component of the fixed point set with $\mathbb{Z}_p$-coefficient  is  at most the number of generators of $H^*(X).$ Following this thread of research,  the fixed point sets of involution on the product of projective spaces and Dold manifold have been determined in \cite{chang, peltier}. \\
	\indent On the other hand, if $F$ is empty, then the study of orbit spaces is also an interesting problem.
	It is well known that the orbit spaces of free actions of  $\mathbb{Z}_2,~ \mathbb{S}^1$ and $\mathbb{S}^3$ on  $\mathbb{S}^n,$ $\mathbb{S}^{2n+1}$ and $\mathbb{S}^{4n+3}$ are projective spaces $P^n(q),$ where $q=1,2$ and $4,$ respectively. Recently, the orbit spaces of free involutions on real Milnor manifolds, Dold manifolds, and the product of two projective spaces have been discussed in \cite{dey,Mattos,msingh}. Further, the orbit spaces of free actions of $G=\mathbb{Z}_p, p$ a prime, or $G=\mathbb{S}^d, d=1$ or $3,$ on the cohomology product of spheres $\mathbb{S}^n \times \mathbb{S}^m$ have been studied in \cite{orbit space, anju}.

	In continuation, it seems to be an interesting problem to determine the possibilities of the fixed point sets  and the orbit spaces of free involutions on a space $X$ with mod $2$ cohomology isomorphic to the  product of  projective space and  spheres $ \mathbb{F}P^n \times \mathbb{S}^m,$ where $\mathbb{F}=\mathbb{R}$  or $\mathbb{C}.$ In this paper, we have determined the possibilities of the connected fixed point sets of involution on $X$ and classified  the  orbit spaces of free involutions on $X.$ 
	\section{Preliminaries}  
	\noindent In this section, we recall  some known facts that will be used in this paper.  Let $G$ be a finite cyclic group  acting on a finite CW-complex  $X.$ The associated Borel fibration is $ X \stackrel{i} \hookrightarrow X_G \stackrel{\pi} \rightarrow B_G,$ where $X_G = (X\times E_G)/G$ is  Borel space obtained by diagonal action of $G$ on space $X\times E_G$ and $B_G$ is classifying space  of $G$ which is the orbit space of a  free action of $G$ on contractible space $E_G.$  Suppose that the fixed point set of an  action $G$ on $X$ is nonempty. Let $x \in F$ and $\eta_x : B_G \hookrightarrow X_G$ be a cross section of projection map $\pi: X_G \rightarrow B_G ,$ where $B_G \approx (\{x\} \times E_G)/G ,$ then $H^*(X_G)\cong
	ker~ \eta^*_x \oplus Im~ \pi^*$ \cite{Bredon}. A space $X$ is said to be  totally nonhomologous to zero (TNHZ)
	in $X_G$ if the inclusion map $i: X \hookrightarrow X_G$ induces
	a surjection in the cohomology $ i^*: H^*(X_ G) \rightarrow H^*(X). $ We have used the following Propositions to prove our main results related to fixed point sets.

	\begin{proposition}(\cite{bredon})
		Let $G=\mathbb{Z}_2$  act on  a finite CW-complex $X$ and $\sum$ rk $H^i(X,\mathbb{Z}_2)< \infty.$ Then, the following statements are equivalent: \\
		(a) $X$ is TNHZ (mod 2) in $X_G.$   \\
		(b) $\sum$ rk $H^i(F,\mathbb{Z}_2) =\sum$ rk $H^i(X,\mathbb{Z}_2).$ \\ 
		(c) $G$ acts trivially on $H^*(X; \mathbb{Z}_2)$ and spectral sequence $E^{r,q}_2$ of $X_G \rightarrow B_G$ degenerates. 
	\end{proposition}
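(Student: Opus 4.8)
The plan is to work throughout with the mod $2$ Leray--Serre spectral sequence $\{E_r^{p,q},d_r\}$ of the Borel fibration $X\stackrel{i}\hookrightarrow X_G\stackrel{\pi}\rightarrow B_G$, for which $E_2^{p,q}=H^p(B_G;\mathcal{H}^q(X;\mathbb{Z}_2))$ converges to $H^*(X_G;\mathbb{Z}_2)$, and to exploit that $B_G=\mathbb{R}P^{\infty}$, so that $R:=H^*(B_G;\mathbb{Z}_2)=\mathbb{Z}_2[t]$ with $\deg t=1$. Since $\sum\mathrm{rk}\,H^i(X;\mathbb{Z}_2)<\infty$, the spectral sequence has only finitely many nonzero rows, and $H^*(X_G;\mathbb{Z}_2)$ is a finitely generated graded $R$-module. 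I would prove the two equivalences $(a)\Leftrightarrow(c)$ and $(b)\Leftrightarrow(c)$.

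For $(a)\Leftrightarrow(c)$: the restriction $i^*:H^*(X_G)\to H^*(X)$ is the edge homomorphism of the spectral sequence, so its image equals $E_\infty^{0,*}\subseteq E_2^{0,*}=H^0(B_G;\mathcal{H}^*(X))=\mathcal{H}^*(X)^G\subseteq H^*(X)$. Hence $i^*$ is surjective iff $E_\infty^{0,*}=E_2^{0,*}=H^*(X)$, which happens iff (i) $\mathcal{H}^*(X)^G=H^*(X)$, i.e. $G$ acts trivially on $H^*(X;\mathbb{Z}_2)$, and (ii) every $d_r$ vanishes on the fibre column $E_r^{0,*}$. Granting (i), one has $E_2^{*,*}=R\otimes_{\mathbb{Z}_2}H^*(X)$, which is generated as an $R$-algebra by $E_2^{0,*}$ together with the permanent cycles $E_2^{*,0}=R$; since the $d_r$ are derivations, (ii) is then equivalent to the degeneration of the spectral sequence, and this gives $(a)\Leftrightarrow(c)$.

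For $(b)\Leftrightarrow(c)$ the main external input is the Borel localization theorem: inverting $t$, the restriction induces an isomorphism $S^{-1}H^*_G(X)\cong S^{-1}H^*_G(F)$ with $S=\{1,t,t^2,\dots\}$, and since $G$ acts trivially on $F$ we have $X_G|_F\simeq F\times B_G$, hence $H^*_G(F)=H^*(F;\mathbb{Z}_2)\otimes_{\mathbb{Z}_2}R$, so the free $R$-rank of $H^*_G(X)$ equals $\sum_i\mathrm{rk}\,H^i(F;\mathbb{Z}_2)$. On the other hand this free rank equals that of the associated graded $E_\infty^{*,*}$ (ranks being additive along the finite filtration), and for each $q$ the $R$-module $E_\infty^{*,q}$ (with $t$ acting by $p\mapsto p+1$) is a subquotient of $E_2^{*,q}=H^*(B_G;\mathcal{H}^q(X))$, whose free rank is the number of trivial $\mathbb{Z}_2[\mathbb{Z}_2]$-summands of $\mathcal{H}^q(X)$ — because any finite-dimensional $\mathbb{Z}_2[\mathbb{Z}_2]$-module is a sum of copies of $\mathbb{Z}_2$ and of $\mathbb{Z}_2[\mathbb{Z}_2]$, and $H^p(B_G;-)$ of the latter vanishes for $p>0$. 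Therefore
\[
\sum_i\mathrm{rk}\,H^i(F;\mathbb{Z}_2)=\text{(free rank of }H^*_G(X))\le\sum_q\text{(free rank of }E_2^{*,q})\le\sum_i\mathrm{rk}\,H^i(X;\mathbb{Z}_2),
\]
so condition $(b)$ forces both inequalities to be equalities. The right-hand equality means every $\mathcal{H}^q(X)$ is $G$-trivial, so $E_2^{*,q}\cong R^{\,\mathrm{rk}\,H^q(X)}$ is a free $R$-module; the left-hand equality then forces degeneration. Conversely $(c)$ gives $E_\infty=E_2=R\otimes_{\mathbb{Z}_2}H^*(X)$, so $H^*_G(X)$ has free $R$-rank $\sum_i\mathrm{rk}\,H^i(X;\mathbb{Z}_2)$, which by the localization computation equals $\sum_i\mathrm{rk}\,H^i(F;\mathbb{Z}_2)$, i.e. $(b)$.

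The step I expect to be most delicate is the implication ``equality of free ranks $\Rightarrow$ degeneration''. After reducing to trivial action, where each $E_2^{*,q}$ is free, one notes that $t\in E_2^{1,0}$ is a permanent cycle, so every $d_r$ is $R$-linear; a single nonzero $d_r$ then has image a nonzero submodule of a free $R$-module, hence of strictly positive free rank, and this strictly decreases the total free $R$-rank in passing from $E_2$ to $E_\infty$ — contradicting the equality just obtained. The other nontrivial ingredient is the Borel localization theorem itself, whose proof (Smith theory together with a limit argument over the skeleta of $B_G$) I would cite rather than reproduce.
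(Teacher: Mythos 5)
The paper offers no proof of this proposition --- it is quoted verbatim from Bredon's \emph{Introduction to Compact Transformation Groups} --- so there is no in-paper argument to compare against; your proof is correct and is essentially the standard one from that source. Both halves are sound: $(a)\Leftrightarrow(c)$ via the edge homomorphism together with the derivation property of the $d_r$ (once the action is trivial, $E_2$ is generated over $R=\mathbb{Z}_2[t]$ by the fibre column and the base row, both of which consist of cocycles), and $(b)\Leftrightarrow(c)$ via localization plus the rank count $\mathrm{frk}_R\,H^*(B_G;M)=(\text{number of trivial summands of }M)\le\dim_{\mathbb{Z}_2}M$ for a finite $\mathbb{Z}_2[\mathbb{Z}_2]$-module $M$, with equality exactly in the trivial case. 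You also correctly isolate and dispose of the one delicate step (equality of free ranks forcing degeneration, using that $t$ is a permanent cocycle so each $d_r$ is $R$-linear and a nonzero image inside a free $R$-module has positive free rank); the only hypotheses worth stating explicitly are the finiteness conditions under which the localization theorem and $H^*_G(F)\cong H^*(F)\otimes R$ hold, which are satisfied here since $X$ may be taken to be a finite $G$-CW complex, making $F$ a finite subcomplex. (Bredon's own text replaces the localization theorem by the equivalent observation, recorded as Proposition 2.3 of this paper, that $H^k(X_G)\to H^k(F_G)$ is an isomorphism in high degrees; the two routes are interchangeable.)
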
 
	\begin{proposition}(\cite{Bredon})\label{2.2}
		Let $X$ be TNHZ in $X_G$ and $\{\gamma_j\}$ be a set of homogeneous elements in $H^*(X_G;\mathbb{Z}_p)$ such that $\{i^*(\gamma_j)\}$ forms $\mathbb{Z}_p$-basis of $H^*(X; \mathbb{Z}_p).$ Then, $H^*(X_G;\mathbb{Z}_p)$ is the free $H^*(B_G)$-module generated by $\{\gamma_j\}.$
	\end{proposition}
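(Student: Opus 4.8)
The plan is to deduce the statement from the multiplicative Leray--Serre spectral sequence $\{E^{s,t}_r,d_r\}$ of the Borel fibration $X \stackrel{i}{\hookrightarrow} X_G \stackrel{\pi}{\rightarrow} B_G$, combined with the hypothesis that $X$ is TNHZ in $X_G$. First I would record the standard fact (the mod $p$ form of Proposition~2.1, due to Bredon) that a TNHZ action has trivial monodromy on $H^*(X;\mathbb{Z}_p)$ — so the local coefficient system on $B_G$ is simple and $E^{s,t}_2 \cong H^s(B_G;\mathbb{Z}_p)\otimes_{\mathbb{Z}_p} H^t(X;\mathbb{Z}_p)$ as an $H^*(B_G)$-module — and that the spectral sequence degenerates, i.e.\ $E_2 = E_\infty$. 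In particular, under the edge identification $E^{0,*}_\infty = E^{0,*}_2 \cong H^*(X;\mathbb{Z}_p)$, the canonical surjection $H^*(X_G;\mathbb{Z}_p) \twoheadrightarrow E^{0,*}_\infty$ is precisely $i^*$, so the class of each $\gamma_j$ in $E^{0,t_j}_\infty$ (where $t_j = \deg\gamma_j$) equals $i^*(\gamma_j)$.

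Next I would compare two filtered $H^*(B_G)$-modules. On $H^*(X_G;\mathbb{Z}_p)$ take the decreasing filtration $\{F^s\}$ of the spectral sequence, so that $F^sH^n/F^{s+1}H^n \cong E^{s,\,n-s}_\infty$; this filtration is multiplicative, with $\pi^*(H^s(B_G)) \subseteq F^s$ and $F^s\cdot F^{s'} \subseteq F^{s+s'}$. On the free module $M = \bigoplus_j H^*(B_G)\,e_j$ with $\deg e_j = t_j$, take $F^sM = \bigoplus_j \bigl(\bigoplus_{s'\ge s} H^{s'}(B_G)\bigr)e_j$, and let $\Phi\colon M \to H^*(X_G;\mathbb{Z}_p)$ be the $H^*(B_G)$-module homomorphism with $\Phi(e_j) = \gamma_j$, where $H^*(B_G)$ acts on $H^*(X_G;\mathbb{Z}_p)$ through $\pi^*$. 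By multiplicativity of the filtration, $\Phi$ is filtration-preserving.

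The crux is to show that the induced map $\operatorname{gr}\Phi$ on associated graded modules is an isomorphism. Using the multiplicative structure of the spectral sequence together with the identification recorded above, on the summand $H^s(B_G)\,e_j$ the map $\operatorname{gr}\Phi$ becomes $b\,e_j \mapsto b\otimes i^*(\gamma_j) \in H^s(B_G)\otimes_{\mathbb{Z}_p} H^{t_j}(X;\mathbb{Z}_p)$. Summing over those $j$ with $t_j = t$ and invoking the hypothesis that $\{i^*(\gamma_j)\}_{t_j = t}$ is a $\mathbb{Z}_p$-basis of $H^t(X;\mathbb{Z}_p)$, this yields an isomorphism onto $H^s(B_G)\otimes_{\mathbb{Z}_p} H^t(X;\mathbb{Z}_p) = E^{s,t}_\infty$ in every bidegree. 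Since both filtrations are finite in each total degree (here $\sum\operatorname{rk}H^i(X;\mathbb{Z}_p) < \infty$ and $E^{s,t}_\infty = 0$ when $s<0$ or $t<0$), a filtration-preserving map inducing an isomorphism on associated graded is itself an isomorphism; hence $\Phi$ is an isomorphism of $H^*(B_G)$-modules, which is exactly the assertion.

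I expect the main obstacle to lie in the third step: verifying carefully that $\operatorname{gr}\Phi$ is literally ``tensoring against $i^*(\gamma_j)$'', which rests on the multiplicativity of the Leray--Serre spectral sequence (so that the class of $\pi^*(b)\cup\gamma_j$ in $E^{s,t_j}_\infty$ is the product of the class of $\pi^*(b)$ in $E^{s,0}_\infty$ with that of $\gamma_j$ in $E^{0,t_j}_\infty$) and on pinning down the edge homomorphism as $i^*$. Once those identifications are in place, the remaining work is the routine associated-graded comparison of filtered modules.
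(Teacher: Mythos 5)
Your argument is correct and is essentially the standard Leray--Hirsch-type proof from the cited reference: the paper itself states this proposition without proof, quoting Bredon, and Bredon's argument is exactly the one you give (TNHZ forces simple coefficients and degeneration at $E_2$, the edge homomorphism identifies $E_\infty^{0,*}$ with $i^*$, and the filtered comparison of $\bigoplus_j H^*(B_G)e_j$ with $H^*(X_G;\mathbb{Z}_p)$ is an isomorphism on associated graded pieces, hence an isomorphism since the filtrations are finite in each total degree). The only points worth writing out in full are the two identifications you yourself flag -- that the class of $\pi^*(b)\cup\gamma_j$ in $E_\infty^{s,t_j}$ equals $b\otimes i^*(\gamma_j)$ under $E_\infty^{s,t}=E_2^{s,t}\cong H^s(B_G)\otimes H^t(X)$ -- and both are standard consequences of multiplicativity of the Serre spectral sequence over a field.
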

	\begin{proposition}\label{thm 2}(\cite{bredon})
		Let $G = \mathbb{Z}_2$ act on the finite CW-complex
		$X$ and  $A \subset X$ be closed and invariant subspace. Suppose that $H^i ( X , A ; \mathbb{Z}_2)
		= 0$ for $i > n.$ Then, the homomorphism       
		$$j ^*: H^ k ( X_G ,A_G; \mathbb{Z}_2) \rightarrow H^k(F_G, F_G \cap A_G; \mathbb{Z}_2)$$
		is an isomorphism for $k > n$. If $( X , A )$ is TNHZ (mod $2$) in $( X_ G , A_G,)$ then $j^*$ is a monomorphism for all $k.$ 
	\end{proposition}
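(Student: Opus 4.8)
The plan is to run the classical Smith-theoretic comparison argument, keeping track of the module structure over $H^*(B_G;\mathbb{Z}_2)=\mathbb{Z}_2[t]$, $\deg t=1$. First I would fix an invariant CW structure in which $F$ and $A$ are closed invariant subcomplexes and choose a closed invariant neighbourhood $\widehat F$ of $F$ that equivariantly deformation retracts onto $F$, so that $X\setminus F$ (equivalently $X\setminus\mathrm{int}\,\widehat F$) is a \emph{free} $G$-space. Excision then gives $H^k_G(X,A\cup F;\mathbb{Z}_2)\cong H^k_G(X\setminus F,A\setminus F;\mathbb{Z}_2)$, and, $G$ acting freely on $X\setminus F$, the map $(X\setminus F)_G\to(X\setminus F)/G$ has contractible fibre, so this group is $H^k((X\setminus F)/G,(A\setminus F)/G;\mathbb{Z}_2)$. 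Inserting this into the long exact sequence of the triple $(X,A\cup F,A)$ in $G$-equivariant cohomology, and using the excision identification $H^k_G(A\cup F,A;\mathbb{Z}_2)\cong H^k_G(F,F\cap A;\mathbb{Z}_2)$ through which the restriction map becomes $j^*$, one sees that to prove $j^*$ is an isomorphism for $k>n$ it suffices to show $H^k((X\setminus F)/G,(A\setminus F)/G;\mathbb{Z}_2)=0$ for all $k>n$.

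For that vanishing I would first deduce $H^k(X\setminus F,A\setminus F;\mathbb{Z}_2)=0$ for $k>n$ from the hypothesis $H^i(X,A;\mathbb{Z}_2)=0$ for $i>n$, via the exact sequence of the pair $(X,X\setminus F)$; this is the step that genuinely uses the smallness hypothesis, and it is where some care is needed in the general finitistic setting, whereas when $X$ is a closed manifold (as in all our applications) $n=\dim X$ and the claim is immediate. Then, since the double cover $X\setminus F\to(X\setminus F)/G$ is classified by a class $w\in H^1((X\setminus F)/G;\mathbb{Z}_2)$ which is nilpotent (the orbit space being finite dimensional), feeding the vanishing just obtained into the relative Gysin sequence of this $S^0$-bundle shows that $\cup w$ is an isomorphism on $H^k((X\setminus F)/G,(A\setminus F)/G;\mathbb{Z}_2)$ for $k>n$; combined with nilpotence of $w$ this forces that group to vanish for $k>n$, completing the first assertion.

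Finally, for the monomorphism statement, assume $(X,A)$ is TNHZ (mod $2$) in $(X_G,A_G)$. By the relative analogue of Proposition \ref{2.2}, $H^*_G(X,A;\mathbb{Z}_2)$ is a free $\mathbb{Z}_2[t]$-module, so multiplication by $t$ is injective on it. Given $x\in H^k_G(X,A;\mathbb{Z}_2)$ with $j^*(x)=0$, choose $N$ with $k+N>n$; then $j^*(t^Nx)=t^Nj^*(x)=0$, and as $j^*$ is an isomorphism in degree $k+N>n$ by the first part we get $t^Nx=0$, hence $x=0$. Thus $j^*$ is a monomorphism in every degree. The main obstacle in the whole argument is the orbit-space vanishing above degree $n$; once that is in hand, both conclusions follow formally from the triad exact sequence and the freeness supplied by the TNHZ hypothesis.
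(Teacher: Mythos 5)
The paper does not prove this proposition; it is quoted from Bredon's book, so your attempt has to be judged against the standard argument there. Your overall architecture (reduce via the triple $(X,F\cup A,A)$ to a vanishing statement for $H^*_G(X,F\cup A)$, pass to the free part, and run a Gysin-plus-nilpotence argument) is the right shape, and your treatment of the second assertion is correct: freeness of $H^*_G(X,A)$ over $\mathbb{Z}_2[t]$ from the relative Leray--Hirsch statement, injectivity of $t^N$, and the isomorphism in high degrees do combine to give the monomorphism in all degrees. But the first part contains a genuine error at its central step. The excision claim $H^k_G(X,A\cup F)\cong H^k_G(X\setminus F,A\setminus F)$ is false: excising a closed set from the subspace of a pair requires keeping the frontier of the excised neighbourhood in the subspace, so the correct identification is with $H^k_G\bigl(X\setminus \mathrm{int}\,\widehat F,\ \partial\widehat F\cup(A\setminus \mathrm{int}\,\widehat F)\bigr)$, not with the pair $(X\setminus F,A\setminus F)$. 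Concretely, for the half-turn on $X=D^2$ with $F=\{0\}$ and $A=\emptyset$ one has $H^*_G(X,F)=0$ (both $X_G$ and $F_G$ are $B_G$), while $H^*_G(X\setminus F)\cong H^*(S^1)\neq 0$. The same example refutes your second key claim: here $n=0$, yet $H^1(X\setminus F;\mathbb{Z}_2)=H^1(S^1;\mathbb{Z}_2)\neq 0$, so $H^{>n}(X,A)=0$ does \emph{not} imply $H^{>n}(X\setminus F,A\setminus F)=0$. The long exact sequence of $(X,X\setminus F)$ cannot rescue this, because $H^*(X,X\setminus F)$ is the local cohomology of $X$ along $F$ and is not controlled by $n$ (e.g.\ $H^{10}(D^{10},D^{10}\setminus\{0\})\neq 0$ with $n=0$).

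The deeper issue is that even the corrected excision leads you to need $H^k(X,F\cup A;\mathbb{Z}_2)=0$ for $k>n$, and by the triple $(X,F\cup A,A)$ this group is a quotient of $H^{k-1}(F,F\cap A;\mathbb{Z}_2)$ --- so you are forced to prove Smith's theorem ($H^{>n}(F,F\cap A)=0$) along the way, and even granting it your Gysin argument only yields the isomorphism for $k>n+1$, not the stated sharp range. Bredon's proof avoids this by running the Smith exact sequences (built from $\sigma=1+g^*$ as in Proposition \ref{E_2}) and performing a single downward induction that kills $H^k(X/G,(F\cup A)/G)$ and $H^k(F,F\cap A)$ for $k>n$ \emph{simultaneously}, starting from their vanishing in very high degrees (finite-dimensionality of $X/G$) and using $H^{>n}(X,A)=0$ at each step; the identification $H^*_G(X,F\cup A)\cong H^*(X/G,(F\cup A)/G)$ then finishes the first assertion. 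That simultaneous induction is the missing idea in your proposal.
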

	Note that \cite{G.Bredon} the induced homomorphism $\eta_x^*$ depends on the component $F_0$ of fixed point set $F$ in which $x$ lies. If $\alpha \in H^n(X_G)$ such that $\alpha \in Ker~ \eta_x^*$ then the restriction of $j: (F_G,x_G) \hookrightarrow (X_G,x_G)$ on $(F_0)_G$ denoted by $j_0$ and the term of  $j_0^*(\alpha)$ involving $H^0(F_0,x_G)$ vanishes.

	\begin{proposition}\label{prop 2.4}(\cite{bredon})
		Let $G=\mathbb{Z}_2$ act on a finite CW-complex $X$ and $X$ is TNHZ in $X_G.$ Then, $a|F$ is nontrivial element of the fixed point set $F,$ for any class $a\in H^n(X;\mathbb{Z}_2)$ such that $a^2 \neq 0.$
	\end{proposition}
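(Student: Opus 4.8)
The plan is to lift $a$ to the Borel cohomology of $X_G$, restrict the lift to the Borel space $F_G$ of the fixed point set, and read off $a|F$ as the coefficient of $t^{0}$ of this restriction. First note that $F\neq\varnothing$: otherwise, since $X$ is TNHZ, we would get $\sum_i\operatorname{rk}H^i(F;\mathbb{Z}_2)=\sum_i\operatorname{rk}H^i(X;\mathbb{Z}_2)=0$, which is absurd. Choose a fixed point $x$. Because $X$ is TNHZ, $i^{*}\colon H^{*}(X_G;\mathbb{Z}_2)\to H^{*}(X;\mathbb{Z}_2)$ is surjective, so pick $\bar a\in H^{n}(X_G;\mathbb{Z}_2)$ with $i^{*}(\bar a)=a$; replacing $\bar a$ by $\bar a-\pi^{*}\eta_x^{*}(\bar a)$ does not change $i^{*}(\bar a)=a$ (as $i^{*}\pi^{*}=0$ in positive degrees) and arranges $\bar a\in\ker\eta_x^{*}$. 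Since $i^{*}(\bar a^{2})=a^{2}\neq 0$ we have $\bar a^{2}\neq 0$ in $H^{2n}(X_G;\mathbb{Z}_2)$. Applying Proposition~\ref{thm 2} with $A=\varnothing$ (using that $X$ is TNHZ), the restriction $j^{*}\colon H^{*}(X_G;\mathbb{Z}_2)\to H^{*}(F_G;\mathbb{Z}_2)$ is a monomorphism in all degrees, hence $\bigl(j^{*}\bar a\bigr)^{2}=j^{*}(\bar a^{2})\neq 0$; in particular $j^{*}\bar a\neq 0$.

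Next I would locate $a|F$. As $G$ acts trivially on $F$, $F_G\simeq F\times B_G$ and $H^{*}(F_G;\mathbb{Z}_2)\cong H^{*}(F;\mathbb{Z}_2)\otimes\mathbb{Z}_2[t]$, where $t\in H^{1}(B_G;\mathbb{Z}_2)$ is the generator; write $j^{*}(\bar a)=\sum_{q=0}^{n}c_q\otimes t^{q}$ with $c_q\in H^{n-q}(F;\mathbb{Z}_2)$. The composition of the fibre inclusion $F\hookrightarrow F_G$ with $j$ equals the fibre inclusion $F\hookrightarrow X\hookrightarrow X_G$, so restricting $j^{*}(\bar a)$ to the fibre gives $c_0=a|F$; moreover the normalization $\bar a\in\ker\eta_x^{*}$ makes the $H^{0}$-coefficient $c_n$ vanish on the component of $F$ that contains $x$ (this is the remark after Proposition~\ref{thm 2}). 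In characteristic $2$, $\bigl(j^{*}\bar a\bigr)^{2}=\sum_q c_q^{2}\otimes t^{2q}$. Thus, whenever $j^{*}\bar a$ carries no positive power of $t$ on a component on which $\bigl(j^{*}\bar a\bigr)^{2}$ is nonzero — for instance when $n=1$, or when that component has no cohomology in degrees $1,\dots,n-1$ — we obtain $0\neq c_0^{2}=(a|F)^{2}$, hence $a|F\neq 0$ and we are done.

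The crux is the remaining case, in which $a|F=c_0$ could be zero while $j^{*}\bar a=t\beta$ for some $\beta\in H^{n-1}(F_G;\mathbb{Z}_2)$. One cannot conclude from this that $\bar a$ is divisible by $t$ in $H^{*}(X_G;\mathbb{Z}_2)$, because $j^{*}$ does not detect divisibility by $t$ — equivalently, $\iota^{*}\colon H^{*}(X;\mathbb{Z}_2)\to H^{*}(F;\mathbb{Z}_2)$ need not be injective. This is exactly where the hypothesis $a^{2}\neq 0$ must enter, and it should be used through the Steenrod algebra, since $a^{2}\neq 0$ is the statement $Sq^{n}a\neq 0$. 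Assuming $c_0=0$, naturality of $Sq$ under $j^{*}$, the Cartan formula, and $Sq^{k}t=0$ for $k\geq 2$ give $j^{*}(Sq^{i}\bar a)=Sq^{i}(t\beta)=t\bigl(Sq^{i}\beta+t\,Sq^{i-1}\beta\bigr)$ for all $i$. These classes are forced to lie in $\im j^{*}$; comparing this with the description of $\im j^{*}$ as an $H^{*}(B_G)$-submodule in each degree — pinned down by the free-module structure of Proposition~\ref{2.2} and by the identities $i^{*}(Sq^{i}\bar a)=Sq^{i}a$ — yields a contradiction (a class is forced into $\im j^{*}$ in an intermediate degree where it cannot lie). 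I expect this last piece of Steenrod-theoretic bookkeeping to be the only genuinely delicate step; everything before it is formal manipulation of the Borel fibration and of Propositions~\ref{2.2} and~\ref{thm 2}.
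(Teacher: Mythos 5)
The paper does not actually prove this proposition; it is quoted from Bredon's book, so there is no internal argument to compare against, and your proposal has to stand on its own. Your first two paragraphs are fine: the lift $\bar a$ with $\eta_x^*(\bar a)=0$, the injectivity of $j^*$ from Proposition~\ref{thm 2}, the identification of $a|F$ with the $t^0$-coefficient $c_0$ of $j^*(\bar a)$, and the vanishing of the $H^0$-coefficient on the component of $x$ are all correct. But the entire content of the proposition sits in the case you defer to the last paragraph, and that paragraph is a plan, not a proof. Worse, the plan as stated cannot produce a contradiction: each class $Sq^i\bar a$ already lives in $H^*(X_G)$, so $j^*(Sq^i\bar a)=t\bigl(Sq^i\beta+t\,Sq^{i-1}\beta\bigr)$ lies in $\operatorname{im}j^*$ tautologically --- nothing is ``forced into $\operatorname{im}j^*$ where it cannot lie.'' One can check that the gap is not closable by bookkeeping with an arbitrary lift of $a$: for $H^*(X)=\mathbb{Z}_2[a]/(a^3)$ with $\deg a=2$ there is an abstract configuration with $F\sim_2 pt\sqcup\mathbb{S}^1$ (so $a|F=0$ in $H^2(F)=0$ while $a^2\neq 0$) that is consistent with the free $\mathbb{Z}_2[t]$-module structures, the injectivity of $j^*$, its bijectivity in high degrees, the equality of total ranks, and even the multiplicative structure of $j^*$; only a finer input rules it out.

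The missing idea is Steenrod's equivariant quadratic construction, which is also what underlies Proposition~\ref{prop 5}. For an invariant class $a\in H^n(X;\mathbb{Z}_2)$ there is a canonical class $P(a)\in H^{2n}(X_G;\mathbb{Z}_2)$ (restrict Steenrod's external square on $X\times X$ along the twisted diagonal) satisfying $i^*P(a)=a\cdot g^*(a)=a^2$ and, because $F_G=F\times B_G$ with the honest diagonal, $j^*P(a)=\sum_{i=0}^{n}Sq^i(a|F)\,t^{\,n-i}$. If $a|F=0$ this restriction vanishes identically, so injectivity of $j^*$ forces $P(a)=0$ and hence $a^2=i^*P(a)=0$, contradicting the hypothesis. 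Note that the relevant lift is a specific lift of $a^2$, not a lift of $a$; no choice of $\bar a\in H^n(X_G)$ over $a$ carries the needed information, which is why your approach stalls exactly where it does.
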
	
	
	Next, we recall some results of Leray-Serre spectral sequence associated to Borel fibration $ X \stackrel{i} \hookrightarrow X_G \stackrel{\pi} \rightarrow B_G.$ For the proof we refer \cite{bredon,mac}
	\begin{proposition}\label{prop 5}(\cite{bredon})
		Let $X$ be a finite CW-complex $G$-space, where $G=\mathbb{Z}_2$ and $g$ be a generator of $G.$ Suppose that $H^i(X; \mathbb{Z}_2)=0$ for all $i>2n,$ and $H^{2n}(X;\mathbb{Z}_2)=\mathbb{Z}_2.$ If an element $a \in H^{n}(X;\mathbb{Z}_2)$ such that $ag^*(a)\neq 0,$ then fixed point set is non empty. Moreover, the element $ag^*(a) \in E_2^{0,2n}$ is a  permanent cocycle in the Leray-Serre  spectral sequence of fibration $X \hookrightarrow X_G \rightarrow B_G,$ for any $a \in H^{n}(X; \mathbb{Z}_2).$
	\end{proposition}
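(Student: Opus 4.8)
The plan is to prove the two assertions in turn, deducing the non‑emptiness of $F$ from the permanent‑cocycle statement together with the vanishing hypotheses on $H^*(X;\mathbb{Z}_2)$.

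\textbf{Step 1 (the permanent cocycle).} First I would show that $\theta:=a\cup g^*(a)$ is a permanent cocycle for every $a\in H^n(X;\mathbb{Z}_2)$ by exhibiting it as the fiber‑restriction of a global class in $H^{2n}(X_G;\mathbb{Z}_2)$. To this end I would compare the Borel fibration of $(X,G)$ with that of $X\times X$ equipped with the flip $\mathbb{Z}_2$‑action $\tau(x,y)=(y,x)$. The map $\delta\colon X\to X\times X$, $\delta(x)=(x,gx)$, is equivariant for the given action on $X$ and the flip on $X\times X$, since $\delta(gx)=(gx,x)=\tau(\delta(x))$, so it induces $\psi\colon X_G\to (X\times X)_{\mathbb{Z}_2}$ lying over the identity of $B_G$. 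Recalling Steenrod's construction of the squaring operations, there is a class $P(a)\in H^{2n}\big((X\times X)_{\mathbb{Z}_2};\mathbb{Z}_2\big)$ whose restriction to the fiber $X\times X$ is the external square $a\otimes a$; then $\psi^*P(a)\in H^{2n}(X_G;\mathbb{Z}_2)$ restricts on the fiber $X$ to $\delta^*(a\otimes a)=a\cup g^*(a)=\theta$, because $\operatorname{pr}_1\circ\delta=\mathrm{id}_X$ and $\operatorname{pr}_2\circ\delta=g$. Since no differential of the spectral sequence enters the column $p=0$, the image of the edge homomorphism $i^*\colon H^{2n}(X_G;\mathbb{Z}_2)\to H^{2n}(X;\mathbb{Z}_2)$ is precisely $E_\infty^{0,2n}$, the submodule of $E_2^{0,2n}$ consisting of permanent cocycles; as $\theta$ lies in this image, $\theta$ is a permanent cocycle. (When $\theta=0$ there is nothing to prove.)

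\textbf{Step 2 (non‑emptiness of $F$).} Now assume $\theta=a\cup g^*(a)\neq 0$. Since $H^{2n}(X;\mathbb{Z}_2)=\mathbb{Z}_2$ and $\operatorname{Aut}(\mathbb{Z}_2)$ is trivial, $G$ acts trivially on $H^{2n}(X;\mathbb{Z}_2)$, so $E_2^{0,2n}=H^{2n}(X;\mathbb{Z}_2)=\mathbb{Z}_2$ and $\theta$ is its generator. Let $t\in H^1(B_G;\mathbb{Z}_2)$ generate $H^*(B_G;\mathbb{Z}_2)$. For each $k\ge 0$ and $r\ge 2$ the target of $d_r$ on $t^k\in E_r^{k,0}$ is $E_r^{k+r,1-r}=0$, so $d_r(t^k)=0$; then by the Leibniz rule $d_r(\theta\cdot t^k)=d_r(\theta)\cdot t^k+\theta\cdot d_r(t^k)=0$, using Step 1. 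Also, any differential into $E_r^{k,2n}$ would emanate from $E_r^{k-r,\,2n+r-1}$, a subquotient of $H^{k-r}(B_G;\mathcal H^{\,2n+r-1}(X;\mathbb{Z}_2))=0$, because $H^{2n+r-1}(X;\mathbb{Z}_2)=0$ for $r\ge 2$. Hence $\theta\cdot t^k$ survives to $E_\infty^{k,2n}$, and it is nonzero there, being the generator of $E_2^{k,2n}\cong H^k(B_G;\mathbb{Z}_2)\otimes H^{2n}(X;\mathbb{Z}_2)=\mathbb{Z}_2$ with no way to be killed. Consequently $H^{2n+k}(X_G;\mathbb{Z}_2)\neq 0$ for every $k\ge 0$. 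Finally, if $F$ were empty the $\mathbb{Z}_2$‑action on $X$ would be free, so $X_G=(X\times E_G)/G\to X/G$ would be a homotopy equivalence and $H^*(X_G;\mathbb{Z}_2)\cong H^*(X/G;\mathbb{Z}_2)$; but $X$ is a finite CW‑complex, hence $X/G$ is finite‑dimensional and $H^i(X/G;\mathbb{Z}_2)=0$ for $i$ large, contradicting the previous sentence. Therefore $F\neq\emptyset$.

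I expect the main obstacle to be Step 1: one must correctly invoke the Steenrod power class $P(a)$ on the Borel construction of the flip action, verify that $\psi$ is a morphism of the two Borel fibrations over $B_G$, and confirm that $P(a)$ restricts to $a\otimes a$ on the fiber. Everything in Step 2 is then a formal consequence of the multiplicative structure of the Leray--Serre spectral sequence and the hypothesis that $H^i(X;\mathbb{Z}_2)=0$ for $i>2n$.
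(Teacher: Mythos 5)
Your argument is correct. Note that the paper does not prove this proposition at all --- it is quoted as a preliminary with a citation to Bredon's \emph{Introduction to Compact Transformation Groups}, and your proof is essentially the standard one found there: the equivariant map $x\mapsto(x,gx)$ into $X\times X$ with the flip action, the Steenrod power class $P(a)$ restricting to $a\otimes a$ on the fibre, the identification of the image of the edge homomorphism with $E_\infty^{0,2n}$, and then the contradiction with $H^{2n+k}(X/G;\mathbb{Z}_2)\neq 0$ for all $k$ when the action is free (which for $G=\mathbb{Z}_2$ is equivalent to $F=\emptyset$).
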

	\begin{proposition}\label{E_2} (\cite{bredon})
		Let $G=\mathbb{Z}_2$ act on a finite CW-complex $X$ and $g$ be the generator of $G$. Then, the $E_2$ term of the Leray-Serre spectral sequence of the fibration  $ X \stackrel{i} \hookrightarrow X_G \stackrel{\pi} \rightarrow B_G,$ is given by
		\[
		E_2^{k,i}=
		\begin{cases}
			\text{ker $\tau$} & \mbox{for}~ ~k=0 \\
			\text{ker $\tau$/Im $\sigma$} & \text{for } k>0\\

		\end{cases}
		\]
		where $\tau = \sigma = 1+g^*.$
	\end{proposition}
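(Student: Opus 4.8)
The plan is to identify the $E_2$ page of the Leray--Serre spectral sequence of the Borel fibration with the group cohomology of $\mathbb{Z}_2$ with coefficients in $H^*(X;\mathbb{Z}_2)$, and then read off the answer from the standard $2$-periodic free resolution of $\mathbb{Z}_2$.

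First I would invoke the general description of the $E_2$ term of the cohomology Leray--Serre spectral sequence of a fibration $F\hookrightarrow E\rightarrow B$ with path-connected base, namely $E_2^{k,i}\cong H^k(B;\mathcal{H}^i(F;\mathbb{Z}_2))$, the cohomology of the base with coefficients in the local system $\mathcal{H}^i(F;\mathbb{Z}_2)$ furnished by the monodromy action of $\pi_1(B)$ on $H^i(F;\mathbb{Z}_2)$. Applying this to $X \stackrel{i}\hookrightarrow X_G \stackrel{\pi}\rightarrow B_G$ with $G=\mathbb{Z}_2$, I would use that $B_G$ is a $K(\mathbb{Z}_2,1)$, so that cohomology of $B_G$ with local coefficients coincides with the group cohomology of $\mathbb{Z}_2$; this gives $E_2^{k,i}\cong H^k(\mathbb{Z}_2;H^i(X;\mathbb{Z}_2))$, where the relevant $\mathbb{Z}_2[\mathbb{Z}_2]$-module structure on $H^i(X;\mathbb{Z}_2)$ is the one induced by the generator $g$ of $G$, i.e. by $g^*$. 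Confirming that the monodromy module is exactly this one is the point that requires a moment's care.

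Next I would compute $H^k(\mathbb{Z}_2;M)$ for $M=H^i(X;\mathbb{Z}_2)$ regarded as a $\mathbb{Z}_2[\mathbb{Z}_2]$-module. Over $\mathbb{Z}_2$ the usual $2$-periodic free resolution of the trivial module collapses, since the two maps $g-1$ and the norm $1+g$ coincide, to
\[
\cdots\xrightarrow{1+g}\mathbb{Z}_2[\mathbb{Z}_2]\xrightarrow{1+g}\mathbb{Z}_2[\mathbb{Z}_2]\xrightarrow{1+g}\mathbb{Z}_2[\mathbb{Z}_2]\xrightarrow{\ \epsilon\ }\mathbb{Z}_2\to 0 ;
\]
equivalently, this is the cellular chain complex, over $\mathbb{Z}_2$, of $S^\infty$ (a model for $E\mathbb{Z}_2$) with its free cell-permuting $\mathbb{Z}_2$-action. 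Applying $\Hom_{\mathbb{Z}_2[\mathbb{Z}_2]}(-,M)$ and using $\Hom_{\mathbb{Z}_2[\mathbb{Z}_2]}(\mathbb{Z}_2[\mathbb{Z}_2],M)\cong M$ produces the cochain complex $M\xrightarrow{1+g^*}M\xrightarrow{1+g^*}M\xrightarrow{1+g^*}\cdots$, whose cohomology is $\ker(1+g^*)$ in degree $0$ and $\ker(1+g^*)/\im(1+g^*)$ in every positive degree. Setting $\tau=\sigma=1+g^*$ gives precisely the asserted formula.

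I do not expect a serious obstacle: once the identification $E_2^{k,i}\cong H^k(\mathbb{Z}_2;H^i(X;\mathbb{Z}_2))$ is in place, the rest is the standard homological algebra of a cyclic group of order two over $\mathbb{Z}_2$. The only delicate point is the bookkeeping in the middle step --- pinning down that the local coefficient system on the $E_2$ page is $H^i(X;\mathbb{Z}_2)$ equipped with the $g^*$-action, and keeping the case $k=0$ (invariants, $\ker\tau$) separate from the case $k>0$ (the subquotient $\ker\tau/\im\sigma$), both of which drop straight out of the resolution above.
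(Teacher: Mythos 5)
This proposition is stated in the paper as a cited preliminary (from Bredon's book), with no proof given in the text, so there is nothing internal to compare your argument against. Your proof is correct and is the standard one: identify $E_2^{k,i}$ with the group cohomology $H^k(\mathbb{Z}_2;H^i(X;\mathbb{Z}_2))$ for the monodromy module structure given by $g^*$, and compute it from the $2$-periodic free resolution of $\mathbb{Z}_2$ over $\mathbb{Z}_2[\mathbb{Z}_2]$, in which both differentials collapse to $1+g$ in characteristic $2$, yielding $\ker(1+g^*)$ in degree $0$ and $\ker(1+g^*)/\mathrm{im}(1+g^*)$ in positive degrees.
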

	\begin{proposition}(\cite{mac})
		Suppose that  the system of local coefficients on $B_G$ is simple.	Then, the  homomorphisms $i^*: H^*(X_G) \rightarrow H^*(X)$ and $\pi^*: H^*(B_G) \rightarrow H^*(X_G)$ are the edge homomorphisms, \begin{center}
			$ H^k(B_G)=E_2^{k,0}\rightarrow E_3^{k,0}\rightarrow \cdots E_k^{k,0}\rightarrow E_{k+1}^{k,0} = E_{\infty}^{k,0} \subset H^k(X_G)$ and $  H^i(X_G) \rightarrow E_{\infty}^{0,i} \hookrightarrow E_{l+1}^{0,i} \hookrightarrow E_{l}^{0,i} \hookrightarrow \cdots \hookrightarrow E_{2}^{0,i} \hookrightarrow E_2^{0,i} \hookrightarrow H^i(X),$ respectively.
		\end{center} 
	\end{proposition}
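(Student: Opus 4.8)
The plan is to deduce both edge--homomorphism identifications from the standard construction of the Leray--Serre spectral sequence together with its naturality for morphisms of fibrations. Recall that the spectral sequence of $X \stackrel{i}\hookrightarrow X_G \stackrel{\pi}\rightarrow B_G$ arises from the decreasing filtration $\{F^p\}$ of the cochain complex $C^*(X_G)$ by preimages of the skeleta of $B_G$; this induces a filtration
$$H^k(X_G) = F^0 H^k(X_G) \supseteq F^1 H^k(X_G) \supseteq \cdots \supseteq F^{k+1} H^k(X_G) = 0$$
with $F^p H^k(X_G)/F^{p+1} H^k(X_G) \cong E_\infty^{p,\,k-p}$, and, since the local system is simple and $B_G$, $X$ are connected, with $E_2^{p,q} \cong H^p(B_G; H^q(X))$; in particular $E_2^{k,0}\cong H^k(B_G)$ and $E_2^{0,i}\cong H^i(X)$.

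I would first treat the base edge. Along the bottom row $q=0$ the only differential that can be nonzero at $E_r^{k,0}$ is the incoming $d_r : E_r^{k-r,\,r-1} \rightarrow E_r^{k,0}$, since the outgoing differential lands in negative fibre degree; hence $E_r^{k,0}$ is a quotient of $E_{r-1}^{k,0}$ for $2 \le r \le k$, the sequence stabilises at $r=k+1$, and one obtains the displayed chain of surjections ending in $E_\infty^{k,0} = F^k H^k(X_G) \subseteq H^k(X_G)$. To identify the composite $H^k(B_G) = E_2^{k,0} \twoheadrightarrow E_\infty^{k,0} \hookrightarrow H^k(X_G)$ with $\pi^*$, I would invoke naturality for the morphism of fibrations $\big(X \hookrightarrow X_G \stackrel{\pi}\rightarrow B_G\big) \rightarrow \big(\mathrm{pt} \hookrightarrow B_G \stackrel{\mathrm{id}}\rightarrow B_G\big)$ which is $\pi$ on total spaces and the identity on bases: the spectral sequence of the target collapses with $E_2^{k,0}=E_\infty^{k,0}=H^k(B_G)$ and trivial filtration, the induced map on $E_2^{k,0}$ is the identity of $H^k(B_G)$, and the induced map on $E_\infty^{k,0}$ is the one coming from $\pi^*$; chasing this square gives the claim.

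Next I would treat the fibre edge dually. Along the column $p=0$ the only differential that can be nonzero at $E_r^{0,i}$ is the outgoing $d_r : E_r^{0,i} \rightarrow E_r^{r,\,i-r+1}$, so $E_{r+1}^{0,i}$ is a subgroup of $E_r^{0,i}$ and one obtains the chain of injections down to $E_\infty^{0,i} = F^0 H^i(X_G)/F^1 H^i(X_G)$, a quotient of $H^i(X_G)$. Here I would apply naturality to the morphism of fibrations $\big(X \stackrel{\mathrm{id}}\rightarrow X \rightarrow \mathrm{pt}\big) \rightarrow \big(X \hookrightarrow X_G \stackrel{\pi}\rightarrow B_G\big)$ which is $i$ on total spaces and the inclusion of the basepoint on bases: it induces a morphism of spectral sequences from that of $X_G\to B_G$ to the collapsing one of $X\to\mathrm{pt}$, equal to the identity of $H^i(X)$ on $E_2^{0,i}$ and induced by $i^*$ on $E_\infty^{0,i}$ (using that $i^*$ kills $F^1 H^i(X_G)$, since the fibre lies over a $0$-cell). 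This identifies the composite $H^i(X_G) \twoheadrightarrow E_\infty^{0,i} \hookrightarrow \cdots \hookrightarrow E_2^{0,i} = H^i(X)$ with $i^*$.

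The one step that genuinely needs care is the compatibility of the $E_2$-identification $E_2^{p,q}\cong H^p(B_G;H^q(X))$ with the two naturality squares, together with the verification that under this identification the base--edge map is induced by $\pi$ at the cochain level and the fibre--edge map by restriction along $i$; this is exactly where the hypothesis that the local coefficient system on $B_G$ is simple enters, and it is carried out in the classical analysis of the filtration of $C^*(X_G)$ to which the statement refers \cite{mac}. Everything else is formal, being a consequence of the vanishing of half of the first-quadrant differentials along the two edges.
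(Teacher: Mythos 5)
Your argument is correct and is precisely the standard proof of the edge-homomorphism identifications: the paper itself offers no proof of this preliminary, deferring to McCleary, and what you write (vanishing of the outgoing differentials on the bottom row and of the incoming ones on the left column, the identification $E_\infty^{k,0}=F^kH^k(X_G)$ and $E_\infty^{0,i}=H^i(X_G)/F^1H^i(X_G)$, plus naturality for the two morphisms of fibrations over $B_G$ and over a point) is exactly the argument in that reference. The hypothesis of a simple coefficient system enters only where you say it does, namely in identifying $E_2^{k,0}$ with $H^k(B_G)$ and $E_2^{0,i}$ with $H^i(X)$.
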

	\begin{proposition}(\cite{mac})\label{mac}
		Let $G=\mathbb{Z}_2$   act freely on a finite CW-complex $X.$ Then, the Borel space $X_G$ is homotopy equivalent to the orbit space $X/G.$
	\end{proposition}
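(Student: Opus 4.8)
The plan is to exhibit the natural map $p\colon X_G \to X/G$, $[x,e]\mapsto [x]$, as a fibre bundle with contractible fibre $E_G$, and then to promote the resulting weak equivalence to a genuine homotopy equivalence by Whitehead's theorem. Here $p$ is well defined because the first projection $X\times E_G \to X$ is $G$-equivariant, so composing with the orbit map $q\colon X\to X/G$ is $G$-invariant on $X\times E_G$.

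First I would use the hypothesis that $G=\mathbb{Z}_2$ acts \emph{freely} on the finite CW-complex $X$: this makes $q\colon X\to X/G$ a principal $G$-bundle, hence locally trivial, so $X/G$ is covered by open sets $U$ with $q^{-1}(U)\cong U\times G$ equivariantly. Second, I would check that this local triviality passes to the Borel construction: over such a $U$ one has $p^{-1}(U)\cong (U\times G\times E_G)/G$, and since $G$ acts freely on the middle factor every orbit has a unique representative of the form $(u,\mathrm{id},e)$, which gives a homeomorphism $p^{-1}(U)\cong U\times E_G$ compatible with $p$. Thus $p\colon X_G\to X/G$ is a fibre bundle with fibre $E_G$, and $E_G$ is contractible.

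Third, the homotopy exact sequence of the fibration $E_G\hookrightarrow X_G \stackrel{p}{\to} X/G$ together with $\pi_*(E_G)=0$ shows that $p_*\colon \pi_k(X_G)\to \pi_k(X/G)$ is an isomorphism for every $k$, i.e.\ $p$ is a weak homotopy equivalence. Finally, $X/G$ is a finite CW-complex (the $G$-action on $X$ may be taken cellular), and $X_G$ has the homotopy type of a CW-complex, being the total space of a fibre bundle over the CW-complex $B_G$ with CW fibre $X$; hence Whitehead's theorem upgrades $p$ to a homotopy equivalence $X_G\simeq X/G$. (Alternatively one could argue that $X\to X/G$ is classified by a map $f\colon X/G\to B_G$ whose pullback of $E_G\to B_G$ is $G$-homotopy equivalent to $X$, and deduce the statement from the naturality of the Borel construction; but the fibre-bundle argument is the most self-contained.)

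The main obstacle is not conceptual but bookkeeping: one must verify carefully that the principal-bundle structure of $q$ genuinely descends to local triviality of $p$ (so that $E_G$ really is the fibre), and one must be slightly careful about CW-homotopy-type issues before invoking Whitehead. Once these technical points are dispatched, the equivalence $X_G\simeq X/G$ is immediate, and in particular $H^*(X/G;\mathbb{Z}_2)\cong H^*(X_G;\mathbb{Z}_2)$, which is the form in which the result will be used in the sequel.
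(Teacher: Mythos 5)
Your proposal is correct and is essentially the standard argument behind this cited fact: the projection $X_G\to X/G$ is a fibre bundle with contractible fibre $E_G$ precisely because the action is free, hence a weak equivalence, upgraded to a homotopy equivalence by Whitehead once the CW-homotopy-type of both sides is noted. The paper gives no proof of its own (it quotes the result from the reference), and your route coincides with the usual one there, so nothing further is needed.
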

	\begin{proposition}(\cite{bredon})\label{prop 4.5}
		Let $G=\mathbb{Z}_2$   act freely on a finite CW-complex $X.$ If $H^i(X;\mathbb{Z}_2)=0~ \forall~ i>n,$ then $H^i(X/G;\mathbb{Z}_2)=0~ \forall~ i>n.$ 
	\end{proposition}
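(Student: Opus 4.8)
The plan is to transfer the problem to the Borel construction and then feed it into the Smith-theoretic comparison results recalled above. Since $G=\mathbb{Z}_2$ acts freely on $X$, Proposition~\ref{mac} supplies a homotopy equivalence $X/G\simeq X_G$, so it suffices to prove that $H^k(X_G;\mathbb{Z}_2)=0$ for all $k>n$. Freeness has a second consequence that I will use: the fixed point set $F$ is empty, whence $F_G=(F\times E_G)/G=\emptyset$, and therefore $H^k(F_G;\mathbb{Z}_2)=0$ for every $k$.

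With these two facts in hand I would apply Proposition~\ref{thm 2} to the pair $(X,A)$ with $A=\emptyset$ (a closed invariant subspace). Then $A_G=\emptyset$ and $F_G\cap A_G=\emptyset$, and the hypothesis of Proposition~\ref{thm 2}, namely $H^i(X,A;\mathbb{Z}_2)=0$ for $i>n$, reduces to our assumption $H^i(X;\mathbb{Z}_2)=0$ for $i>n$. Proposition~\ref{thm 2} then yields that
$$j^*\colon H^k(X_G;\mathbb{Z}_2)\longrightarrow H^k(F_G;\mathbb{Z}_2)$$
is an isomorphism for all $k>n$. Since the target vanishes for every $k$, we obtain $H^k(X_G;\mathbb{Z}_2)=0$ for $k>n$, and transporting this across the homotopy equivalence $X_G\simeq X/G$ proves the proposition.

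I expect the genuine content — and the only real obstacle — to be importing a comparison theorem of the strength of Proposition~\ref{thm 2} rather than any computation. A naive attempt through the Leray--Serre spectral sequence of $X\hookrightarrow X_G\to B_G$ stalls: because $B_G=B\mathbb{Z}_2$ has the homotopy type of $\mathbb{RP}^\infty$, the $E_2$-page $H^p(B_G;\mathcal{H}^q(X))$ may be nonzero for arbitrarily large $p$ even though it vanishes for $q>n$, so no top-degree estimate is available and some Smith-theoretic input is unavoidable. A self-contained alternative to citing Proposition~\ref{thm 2} is to run the Gysin sequence of the double cover $X\to X/G$: if $w\in H^1(X/G;\mathbb{Z}_2)$ denotes its classifying class, the vanishing $H^k(X;\mathbb{Z}_2)=H^{k+1}(X;\mathbb{Z}_2)=0$ for $k>n$ forces cup product with $w$ to be an isomorphism $H^k(X/G;\mathbb{Z}_2)\to H^{k+1}(X/G;\mathbb{Z}_2)$ for all $k>n$; since $X/G$ is finite dimensional its cohomology eventually vanishes, and propagating this isomorphism downward kills $H^k(X/G;\mathbb{Z}_2)$ for every $k>n$. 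Either route settles the claim.
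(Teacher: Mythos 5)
Your argument is correct, and since the paper only cites this proposition to Bredon without giving a proof, your main route is exactly the standard one: apply Proposition~\ref{thm 2} with $A=\emptyset$, use that freeness forces $F=\emptyset$ so $H^k(F_G;\mathbb{Z}_2)=0$ for all $k$, conclude $H^k(X_G;\mathbb{Z}_2)=0$ for $k>n$, and transport through the homotopy equivalence $X_G\simeq X/G$ of Proposition~\ref{mac}. Your Gysin-sequence alternative is also sound (and self-contained), and your observation that the naive Leray--Serre estimate stalls because $B_{\mathbb{Z}_2}\simeq \mathbb{RP}^\infty$ is well taken.
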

	\noindent Recall  that 
	$H^*(\mathbb{F}P^n \times \mathbb{S}^m; \mathbb{Z}_2)=\mathbb{Z}_2[a,b]/<{a^{n+1},b^2}>,$ where deg $b=m$ and deg $a=l, l=1$ or $2$   for $\mathbb{F}= \mathbb{R}$ or $ \mathbb{C},$  respectively.\\
	\noindent Throughout the paper,  
	$H^*(X)$ will denote the \v{C}ech cohomology of a space $X,$ and   $X\sim_2 Y,$  means $H^*(X;\mathbb{Z}_2 )\cong H^*(Y;\mathbb{Z}_2).$

	\section{Fixed Point Sets of  a product of projective space and  Sphere}                               
	\noindent	Let $G=\mathbb{Z}_2$ act on  a finite CW-complex $X \sim_{2} \mathbb{F}P^n \times\mathbb{S}^m;$ $ n, m \geq 1,$  where $F= \mathbb{R}$ or $\mathbb{C}.$ Suppose that the fixed point set $F$ is nonempty and $X$ is TNHZ in $X_G.$ 
	In this section, we have determined the possibilities of connected fixed point sets of involutions on $X.$ 
	
	\noindent First, we prove the following result:
	\begin{theorem}\label{thm 3.1}
		Let $G=\mathbb{Z}_2$ act on a finite CW-complex $X \sim_{2} \mathbb{R}P^n \times\mathbb{S}^m,$ $ n,m\geq 1.$  If $X$ is TNHZ in $X_{G},$ then the connected fixed point sets are one of the following:   
		\begin{enumerate}
			\item $F \sim_{2} \mathbb{R}P^n \times\mathbb{S}^q,$ where $1 \leq q \leq m,$ and
			\item  $F\sim_2 \mathbb{R}P^{n+1}\# \mathbb{R}P^{n+1}.$
		\end{enumerate}
	\end{theorem}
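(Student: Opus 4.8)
The plan is to read the ring $H^{*}(F;\mathbb{Z}_{2})$ off the Borel fibration. Since $X$ is TNHZ in $X_{G}$, Proposition~2.1 gives $\sum_{i}\operatorname{rk}H^{i}(F;\mathbb{Z}_{2})=\sum_{i}\operatorname{rk}H^{i}(X;\mathbb{Z}_{2})=2(n+1)$, shows that $G$ acts trivially on $H^{*}(X;\mathbb{Z}_{2})$, and shows that the Leray--Serre spectral sequence of $X\hookrightarrow X_{G}\to B_{G}$ collapses at $E_{2}$. Because $X$ has the mod $2$ cohomology ring of the closed manifold $\mathbb{R}P^{n}\times\mathbb{S}^{m}$ it satisfies mod $2$ Poincar\'e duality, so by Bredon's theorem the connected set $F$ is a mod $2$ Poincar\'e duality space; write $d$ for its formal dimension, and recall that by Puppe's theorem $H^{*}(F;\mathbb{Z}_{2})$ has at most two algebra generators.

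First I would record $H^{*}(X_{G};\mathbb{Z}_{2})$. Let $t\in H^{1}(B_{G};\mathbb{Z}_{2})$ be the polynomial generator, pulled back to $X_{G}$. By Proposition~2.2 (Leray--Hirsch), $H^{*}(X_{G};\mathbb{Z}_{2})$ is a free $\mathbb{Z}_{2}[t]$-module on $\{a^{i}b^{j}:0\le i\le n,\ 0\le j\le 1\}$, where $a\in H^{1}(X_{G})$ and $b\in H^{m}(X_{G})$ restrict on the fibre to the generators $\bar a,\bar b$ of $H^{*}(X;\mathbb{Z}_{2})$. Since $i^{*}(a^{n+1})=\bar a^{n+1}=0$ and $i^{*}(b^{2})=\bar b^{2}=0$, both $a^{n+1}$ and $b^{2}$ lie in $\ker i^{*}=(t)$. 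Using the splitting $H^{*}(X_{G})\cong\ker\eta_{x}^{*}\oplus\operatorname{im}\pi^{*}$ we may take $a,b\in\ker\eta_{x}^{*}$ without changing $\bar a,\bar b$.

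Now restrict to the fixed set. Since $G$ acts trivially on $F$ we have $F_{G}\simeq F\times B_{G}$, hence $H^{*}(F_{G};\mathbb{Z}_{2})\cong H^{*}(F;\mathbb{Z}_{2})\otimes\mathbb{Z}_{2}[t]$, and by Proposition~2.3 (applied with $A=\emptyset$) the ring map $j^{*}\colon H^{*}(X_{G})\to H^{*}(F_{G})$ is injective and an isomorphism in degrees $>n+m$. Because $a,b\in\ker\eta_{x}^{*}$, the observation following Proposition~2.3 forces $j^{*}(a)=\alpha$ for some $\alpha\in H^{1}(F;\mathbb{Z}_{2})$ and $j^{*}(b)=\beta_{0}+\beta_{1}t+\dots+\beta_{m-1}t^{m-1}$ with $\beta_{k}\in H^{m-k}(F;\mathbb{Z}_{2})$. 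Applying $j^{*}$ to $a^{n+1}\in(t)$ shows that $\alpha^{n+1}$ is divisible by $t$ in $H^{*}(F)[t]$, hence $\alpha^{n+1}=0$; applying $j^{*}$ to $a^{i}$ for $i\le n$ and using injectivity gives $\alpha^{i}\neq0$; and since $\bar a^{2}\neq0$ when $n\ge2$, Proposition~2.4 gives $\alpha=\bar a|_{F}\neq0$. Thus $\mathbb{Z}_{2}[\alpha]/(\alpha^{n+1})\subseteq H^{*}(F;\mathbb{Z}_{2})$, and a rank count after inverting $t$ shows $H^{*}(F;\mathbb{Z}_{2})$ is a free module of rank two over it, so there is a homogeneous class $\gamma$ with $\{1,\gamma\}$ a basis. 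Likewise $j^{*}(b^{2})\in(t)$ forces $\beta_{0}^{2}=0$. (The case $n=1$, where $\mathbb{R}P^{1}=\mathbb{S}^{1}$, is handled separately.)

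It remains to pin down $\deg\gamma$ and $\gamma^{2}$. Two bounds hem in $d=\dim F$: injectivity of $j^{*}$ on $a^{n}b$, whose image $\alpha^{n}j^{*}(b)$ vanishes whenever $H^{i}(F)=0$ for all $i>n$, excludes $d=n$, so $d\ge n+1$; while $\operatorname{rk}H^{n+m}(X_{G};\mathbb{Z}_{2})=2(n+1)$ together with $j^{*}$ being a monomorphism forces $H^{i}(F)=0$ for $i>n+m$, so $d\le n+m$. Running through the possibilities for $\deg\gamma$ and $\gamma^{2}$ and imposing Poincar\'e duality for $F$, one obtains on the one hand the algebras $H^{*}(\mathbb{R}P^{n}\times\mathbb{S}^{q};\mathbb{Z}_{2})$, $1\le q\le m$ (the case in which $\gamma$ can be chosen with $\gamma^{2}=0$ and $\deg\gamma=q$), giving~(1); and on the other hand, when $\gamma$ is forced into degree $1$, Poincar\'e duality gives $\dim H^{1}(F)=\dim H^{2}(F)=2$ and, after a change of basis of $H^{1}(F)$, presents $H^{*}(F;\mathbb{Z}_{2})$ as $\mathbb{Z}_{2}[c,d]/(cd,\,c^{n+1}+d^{n+1})\cong H^{*}(\mathbb{R}P^{n+1}\#\mathbb{R}P^{n+1};\mathbb{Z}_{2})$, giving~(2). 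The main obstacle is showing nothing else survives: a priori there remain further Poincar\'e-duality algebras on two generators with $\alpha^{n+1}=0$ — for instance $\mathbb{Z}_{2}[\alpha,\gamma]/(\alpha^{n+1},\,\gamma^{2}+\alpha^{s}\gamma)$ with $s\ge2$ — and eliminating these requires feeding back the precise form of $j^{*}(b)$, the multiplicative relations among the $a^{i}b^{j}$ in $H^{*}(X_{G};\mathbb{Z}_{2})$, and the naturality of the Steenrod square $\operatorname{Sq}^{1}$ under $i^{*}$ and $j^{*}$.
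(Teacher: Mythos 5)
Your setup coincides with the paper's: both proofs run the Borel fibration, use TNHZ to get the rank count, the collapse of the spectral sequence and the free $H^*(B_G)$-module structure of $H^*(X_G)$, choose lifts of $a,b$ killed by $\eta_x^*$, and exploit injectivity (and surjectivity in high degrees) of $j^*\colon H^*(X_G)\to H^*(F_G)$ to read off $c=a|_F$ with $c^{n+1}=0$, $c^i\neq 0$ for $i\le n$, and a second generator coming from $j^*(\beta)$. Up to that point the proposal is sound.

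The problem is that everything after that point --- which is the actual content of the theorem --- is left undone, and you say so yourself: ``the main obstacle is showing nothing else survives \dots and eliminating these requires feeding back the precise form of $j^*(b)$ \dots and the naturality of $\operatorname{Sq}^1$.'' That is precisely the step the paper carries out and your sketch does not. Concretely, three things are missing. (i) Your claim that $H^*(F)$ is \emph{free} of rank two over $\mathbb{Z}_2[\alpha]/(\alpha^{n+1})$ does not follow from a rank count after inverting $t$: the localized basis element $j^*(b)$ is a sum of homogeneous pieces $d_{m-k}$ of different degrees, and one must determine which pieces occur. The paper does this by writing $t^{k+r}\otimes d$ as $j^*$ of an explicit $H^*(B_G)$-combination of the module generators and comparing coefficients, which yields the full list $d_i=c^i$, $c^{i-q}d$ or $c^i+c^{i-q}d$ in the appropriate ranges; freeness (i.e.\ $c^id\neq 0$ for all $i\le n$) then falls out of $j^*(\alpha^n\beta)=t^r\otimes c^nd\neq 0$. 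Without this you cannot even assert that the degree $q$ of the new generator satisfies $q\le m$, nor that the module structure is that of $\mathbb{R}P^n\times\mathbb{S}^q$. (ii) The same computation is what restricts $d^2$ to the three values $0$, $c^{2q}$, $c^qd$; your sketch never derives this trichotomy. (iii) The elimination of $d^2=c^qd$ for $q>1$ (your family $\gamma^2+\alpha^s\gamma$, $s\ge 2$) is asserted in the paper via Poincar\'e duality of $F$ and is not attempted in your sketch; note also that for this step a bare rank/pairing count is delicate (the ring $\mathbb{Z}_2[c,d]/(c^{n+1},d^2+c^qd)$ has a nondegenerate cup-product pairing in low examples), so ``imposing Poincar\'e duality'' cannot simply be waved at --- it has to be combined with the constraints from (i) and (ii). As it stands the proposal is a correct framing with the decisive case analysis missing, so it does not constitute a proof.
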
 
	\begin{proof}
		Let $x \in  F$ and  $\{a,\cdots, a^n,b ,ab,\cdots a^nb\}$ be a  generating set of $H^*(X,x),$ where deg $a=1$ and deg $b=m.$ Since $X$ is TNHZ in $X_G,$ we get  rk $H^*(F)=2n+2,$ $\pi_1(B_G)$ acts trivially on $H^*(X_G,x_G)$ and the $E_2$-term $E_2^{p,q}=H^p(B_G) \otimes H^q(X)$ of Leray-Serre spectral sequence of the Borel fibration $ X \stackrel{i} \hookrightarrow X_G \stackrel{\pi} \rightarrow B_G$  is $E_{\infty}^{p,q}.$ So, the elements $\{1 \otimes a,1 \otimes a^2,\cdots, 1 \otimes a^n,1 \otimes b ,1 \otimes ab,\cdots 1 \otimes a^nb\}$ are permanent cocycles. Assume that $\alpha \in H^1(X_G,x_G)$ represents  generator $a \in H^1(X,x)$ and $\beta \in H^m(X_G,x_G)$ represents  generator $b \in H^m(X,x)$ such that $\eta_x^*(\alpha)=\eta_x^*(\beta)=0,$ where $\eta_x: \frac{\{x\} \times E_G}{G} \hookrightarrow \frac{X \times E_G}{G}$ is the inclusion map.  By Proposition \ref{2.2}, $\{\alpha, \alpha^2, \alpha^3, \cdots \alpha^n, \beta ,\alpha \beta , \alpha ^2 \beta, \cdots \alpha^n \beta\}$ is a generating set of $H^*(X_G,x_G)$ over $H^*(B_G)$-module.
		As the homomorphisms $j^* : H^k(X_G,x_G) \rightarrow H^k(F_G,x_G)$ are injective for all $k,$ we get $j^*(\alpha)= 1 \otimes c,$ where $0\neq c\in H^1(F,x).$ We know that $i_1^*j^*=j_1^*i^*$ where $i_1: F \hookrightarrow F_G$ and $ j_1: F \hookrightarrow X$  are the inclusion maps. So,  we get  $a|_{F} =c.$  Consequently, we get $j^*(\alpha^i)= 1 \otimes c^i, 2 \leq i \leq n.$ Obviously, $c^{n+1}=0.$ We have 	$ H^m(F_G,x_G)= \bigoplus_{i=0}^{m} H^{m-i}(B_G) \otimes H^i(F,x)$ and $\eta_x(\beta)=0.$  We may assume that	 
		\begin{equation*}
			j^*(\beta)=1 \otimes d_m +  t \otimes d_{m-1} + t^2 \otimes d_{m-2} \cdots + t^k \otimes d_{m-k} \cdots + t^{m-1}\otimes d_{1}.
		\end{equation*}
		Since $j^*$ is injective, the images of generating set of $H^*(X_G,x_G)$ are linearly independent. This implies that for some $i,$ $d_i \neq c^i, 1 \leq i \leq n,$ where deg $d_i=i.$ Suppose $d_q=d_{m-k}=d$ is the least degree element which is not of the form $c^i.$ As $j^*$ is onto on high degrees, for sufficiently large value of $r,$ we can write
		\begin{align*}
			t^{k+r} \otimes d=j^*(A_1 t^{r+m-1}\alpha  +\cdots + A_n t^{r+m-n}\alpha^n+A_m t^{r}\beta + \cdots+A_{m+n} t^{r-n}\alpha^{n} \beta),
		\end{align*}
		where $A_i's$ are in $\mathbb{Z}_2.$ After comparing the coefficient of $t^{k+r}\otimes d,$ we get $A_m=1.$ So, we have 
		\begin{equation*}
			t^{k+r} \otimes d - j^*(t^r\beta)= j^*(A_1t^{r+m-1}\alpha +  \cdots +A_n t^{r+m-n}\alpha^n+ A_{m+1}t^{r-1}\alpha \beta +\cdots+ A_{m+n}t^{r-n}\alpha^n \beta)
		\end{equation*}
		It is easy to observe that
		$$
		t^{r} \otimes d_m+\cdots +t^{r+k-1} \otimes d_{m-(k-1)}+t^{r+k+1} \otimes d_{m-(k+1)}+ \cdots + t^{r+m-1} \otimes d_1$$
		$$	=-j^*(A_1 t^{r+m-1}\alpha + \cdots + A_2 t^{r+m-n}\alpha^n +A_m t^{r}\beta+ \cdots  + \cdots+A_{m+n} t^{r-n}\alpha^{n} \beta).
		$$
		From the above equation, if $q\leq n,$ then $d_i=c^i$ for $ 1\leq i \leq q-1,$  $ d_{i}= c^{i}+c^{i-q}d$ for  $q+1 \leq i \leq n~ \&~$  $d_i=c^{i-q}d$ for $n+1\leq i \leq n+q,$ and zero otherwise. And if $q>n,$ then $d_i=c^i$ for $ 1\leq i \leq n~\&~$   $d_i=c^{i-q}d$ for $q+1 \leq i \leq n+q,$ and zero otherwise. Note that $d_i=0~ \forall~i>m.$  Since, $c^{n+1}=0,$ we have $j^*(\alpha^n\beta )=t^r\otimes c^nd.$ As $\alpha^n\beta \neq 0, $ we get  $c^nd \neq 0$ and hence, $c^id \neq 0$ for $ 1 \leq i \leq n.$ Clearly, $c^id\neq c^{i+q},$ for $q < n$ and  $1 \leq i \leq n-q.$ As $H^*(F)=2n+2,$ $\{1,c,c^2,\cdots c^n,d,cd,\cdots c^nd\}$ forms a basis of $H^*(F).$ Clearly, if $d^2=0$ then  $F \sim_2 \mathbb{R}P^n \times \mathbb{S}^q, 1 \leq q \leq m.$ This realizes possibility (1). And if $d^2 \neq 0,$ then  $d^2=c^{2q}$ or $d^2=c^qd.$ If $d^2=c^{2q},$ then we must have $2q\leq n$ and by the change of basis $d'=d+c^q,$ we get $d'^2=0 $ and $c^id' \neq 0$ for $1 \leq i \leq n.$ Thus, again we get possibility (1). Now, Let  $d^2=c^{q}d.$ It is easy to observe that if $1<q\leq n,$ then $F$ does not satisfy poincar\'{e} duality. So, we must have $q=1.$ Again, by the change of basis $d'=c+d,$  we get $d'^{n+2}=d^{n+2}=d'^{n+1}+ d'^{n+1} = dd'=0.$ Thus, $F\sim_2 \mathbb{R}P^{n+1}\#\mathbb{R}P^{n+1},$ the connected sum of real projective spaces of height $n+1.$ This realizes possibility (2). 
	\end{proof} 
	\begin{remark}
		For $n=1,$ the mod $2$ cohomology of $X$ is isomorphic to $\mathbb{S}^1 \times \mathbb{S}^m.$ By Theorem \ref{thm 3.1}, it is clear that the  possible connected  fixed point sets are either  $F\sim_2 \mathbb{S}^1 \times \mathbb{S}^q, 1 \leq q \leq m$ or $F\sim_2 \mathbb{R}P^2 \# \mathbb{R}P^2.$ These possibilities have also been realized in [Theorem 3.11, \cite{j1}].
	\end{remark}
	\noindent Next, we have  determined the possibilities of connected fixed point sets of involutions on $X\sim_{2} \mathbb{C}P^n \times \mathbb{S}^m,n, m\geq 1.$
	\begin{theorem}\label{thm 3.2}
		Let $G=\mathbb{Z}_2$ act on a finite CW-complex $X \sim_{2} \mathbb{C}P^n \times\mathbb{S}^m,$ $ n,m \geq 1.$  If $X$ is TNHZ in $X_{G},$ then the connected fixed point sets are one of the following:  
		\begin{enumerate}
			\item $F \sim_{2} \mathbb{F}P^n \times\mathbb{S}^q,$ where $\mathbb{F}=\mathbb{R}$ or $\mathbb{C},$ $1 \leq q \leq m.$
			\item   $F\sim_2 \mathbb{F}P^{n+1}\# \mathbb{F}P^{n+1},$ where  $\mathbb{F}=\mathbb{R}$ or $\mathbb{C}.$			
			\item $ H^*(F) $ is generated by $c$ and $d,$ with  $c^{n+1}=d^2+c^{q}=d^{2l+2}=0,$ where deg $c=2,$ deg $d=q$  and $l=[\frac{n}{q}],$ $ ~q$ odd.	Moreover, for $q=1,$ $F \sim_{2} \mathbb{R}P^{2n+1}.$
			\item $ H^*(F) $ is generated by $c$ and $d,$ with  $c^{r+1}=c^r+d^{\frac{r}{q}}=cd=0,$ where deg $c=1,$ deg $d=q\leq n,$ $r=q(2n+2)/(q+1),$ and $n=(q+1)k-1,$ for $q$ even $\&$ $n=(\frac{q+1}{2})k-1,$ for $q$ odd. Moreover, for $q=1,$ $F \sim_2 \mathbb{R}P^{k}\# \mathbb{R}P^{k},$ $n=k-1$ and  for $q=2,$ $F \sim_2 \mathbb{R}P^{4k}\# \mathbb{C}P^{2k},$  $n=3k-1$ for some $ k \in \mathbb{N}.$

			\item $ H^*(F) $ is generated by $c$ and $d,$ with   $c^{r+1}=c^{qj}+d^{j}=c^{r-q+1}d=0,$ where deg $c=1,$ deg $d=q< n,$   $r=\frac{2n+2}{j}+qj-(q+1),$  $n+1= jk,$ for some $k\in \mathbb{N},$ or $j=2,$  and $n>\frac{(q+1)j}{2}-1.$

		\end{enumerate}
	\end{theorem}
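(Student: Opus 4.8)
The plan is to run the proof of Theorem~\ref{thm 3.1} again; the genuinely new feature is that, because $\deg a=2$, a lift of $a$ restricts to $H^*(F_G,x_G)$ with a possible term $t\otimes(\text{degree-}1\text{ class})$, and it is exactly this term that allows odd-degree generators---and hence the extra families (3)--(5)---to appear in $H^*(F)$. First I would fix $x\in F$, choose a generating set $\{a,\dots,a^n,b,ab,\dots,a^nb\}$ of $H^*(X,x)$ with $\deg a=2$, $\deg b=m$, and record the consequences of $X$ being TNHZ in $X_G$: $\operatorname{rk}H^*(F)=\operatorname{rk}H^*(X)=2n+2$; the Leray--Serre spectral sequence of $X\hookrightarrow X_G\to B_G$ collapses and $\pi_1(B_G)$ acts trivially; there are lifts $\alpha\in H^2(X_G,x_G)$ and $\beta\in H^m(X_G,x_G)$ of $a,b$ with $\eta_x^*(\alpha)=\eta_x^*(\beta)=0$ such that (Proposition~\ref{2.2}) $H^*(X_G,x_G)$ is $H^*(B_G)=\mathbb{Z}_2[t]$-free on $\{\alpha^i,\alpha^i\beta\}$, and (Proposition~\ref{thm 2}) $j^*\colon H^*(X_G,x_G)\to H^*(F_G,x_G)$ is injective. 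I would also record that $F$ satisfies $\mathbb{Z}_2$-Poincar\'e duality, by Bredon's theorem, since $X\sim_2\mathbb{C}P^n\times\mathbb{S}^m$ does and $X$ is finitistic.

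Next, write $j^*(\alpha)=1\otimes c_2+t\otimes c_1$ with $c_i\in H^i(F,x)$ (the $t^2\otimes 1$ term is absent since $\eta_x^*(\alpha)=0$), so that $a|_F=c_2$ by $i_1^*j^*=j_1^*i^*$, and split into Case~I ($c_1=0$) and Case~II ($c_1\neq0$). These are genuinely different: in Case~I, $c:=c_2$ has degree $2$ and, since $a^n\neq0$ and $a^{n+1}=0$ in $H^*(X)$, satisfies $c^n\neq0=c^{n+1}$, so $F$ carries a $\mathbb{C}P^n$-type truncated polynomial subring; in Case~II, $c:=c_1$ is a true $1$-dimensional generator, so the answers with $\mathbb{F}=\mathbb{R}$ can occur only there. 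In Case~I I would rerun the $j^*(\beta)=\sum_k t^k\otimes d_{m-k}$ bookkeeping of Theorem~\ref{thm 3.1} verbatim, isolate the least-degree entry $d:=d_q$ that is not a power of $c$, show $c^id\neq0$ for $0\le i\le n$, and read off the basis $\{1,c,\dots,c^n,d,cd,\dots,c^nd\}$ of $H^*(F)$. It then remains to pin down $d^2\in H^{2q}(F)$ using that $b^2=0$ forces the $t^0$-component of $j^*(\beta)^2$ to vanish, together with Poincar\'e duality: for $q$ odd the surviving options are $d^2=0$, giving (1) with $\mathbb{F}=\mathbb{C}$, and $d^2=c^q$, giving (3) (with $c=d^2$ identifying $F\sim_2\mathbb{R}P^{2n+1}$ when $q=1$); for $q$ even, a change of basis reduces the nonzero cases either to $d^2=0$ (again (1)) or to the relation $d^2=c^{q/2}d$, which, where compatible with the constraints just described, is the ring of (2) with $\mathbb{F}=\mathbb{C}$.

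In Case~II I would first show, using that $j^*$ is an injective ring map, that the powers $1,c,c^2,\dots$ stay independent up to the first one expressible through strictly lower-degree classes, that this power is $c^{r+1}=0$, and that $\mathbb{Z}_2[c]/(c^{r+1})$ embeds in $H^*(F)$; analyzing $j^*(\beta)$ as in Theorem~\ref{thm 3.1} produces a second generator $d$ of some degree $q$ and a monomial $\mathbb{Z}_2$-basis of $H^*(F)$ in $c,d$. The rank identity $\operatorname{rk}H^*(F)=2n+2$ together with the relations coming from $a^{n+1}=0$ and $b^2=0$ then fix $r$ and the relation tying $c^r$ (resp.\ $c^{qj}$) to a power of $d$, while $\mathbb{Z}_2$-Poincar\'e duality forces $cd=0$ (resp.\ $c^{r-q+1}d=0$), the divisibility $n+1=jk$ (or $j=2$) and the parity conditions---yielding exactly the rings of (1) with $\mathbb{F}=\mathbb{R}$, (2) with $\mathbb{F}=\mathbb{R}$, (4) and (5); the identifications for $q=1$ and $q=2$ are then checked directly.

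The hard part, in both cases, is this final algebraic step: once $H^*(F)$ is known as a graded $\mathbb{Z}_2$-module, one must enumerate the ring structures it can carry and eliminate all but the listed ones using only multiplicativity of $j^*$, the relations $a^{n+1}=0$, $b^2=0$ pushed forward by $j^*$, and $\mathbb{Z}_2$-Poincar\'e duality, and it is precisely there that the divisibility and parity constraints of (3)--(5) are generated. A secondary technical point is that the ring $H^*(F_G;\mathbb{Z}_2)$ over $\mathbb{Z}_2[t]$ is, a priori, only filtered, so the manipulations of $j^*(\alpha)^i$ and $j^*(\beta)^2$ must be read on the associated graded; one has to check that the lifts $\alpha,\beta$ can be chosen so that no higher-order-in-$t$ corrections interfere with the identities used above.
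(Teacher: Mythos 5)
Your overall strategy coincides with the paper's: use TNHZ to get $\operatorname{rk}H^*(F)=2n+2$ and injectivity of $j^*$, write $j^*(\alpha)=t\otimes c_1+1\otimes c_2$ and $j^*(\beta)=\sum_k t^k\otimes d_{m-k}$, and split on the shape of $j^*(\alpha)$. Your Case I ($c_1=0$) is the paper's Case (1), and your treatment of it --- locating the least-degree $d_q$ not a power of $c_2$, showing $c_2^id\neq 0$, and sorting $d^2\in\{0,\,c_2^q,\,c_2^{q/2}d\}$ by Poincar\'e duality and a change of basis --- matches the paper and would go through.

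The gap is in Case II. There the paper does not simply ``rerun'' the Theorem \ref{thm 3.1} bookkeeping: everything hinges on a further subdivision your plan does not identify, namely whether $c_1^2=c_2$ or not, and within each branch whether $c_1^2=0$, $c_2^2=0$, $c_1^{n+1}=0$, $c_1c_2=0$, $c_1^nc_2=0$. It is precisely from these branches --- by expanding $j^*(\alpha^n)=\sum_r\binom{n}{r}t^r\otimes c_2^{n-r}c_1^r$ and $j^*(\alpha^n\beta)$ and forcing most mixed monomials $c_2^{n-r}c_1^{r+1}$ to vanish via the rank bound --- that the relations $c^{qj}=d^j$, the formal dimension $r=\frac{2n+2}{j}+qj-(q+1)$, and the divisibility conditions $n+1=jk$ of possibilities (4) and (5) are actually produced. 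Saying ``the rank identity together with Poincar\'e duality then fix $r$ and the relations'' names the right tools but not the argument; as written one cannot check exhaustiveness (e.g.\ that $c_1^{n+1}\neq 0$ and $c_1^nc_2\neq 0$ cannot hold simultaneously, or that $n$ must be even when $c_1^2=0\neq c_2$). Two smaller points: your ``secondary technical point'' is a non-issue, since $G$ acts trivially on $F$, so $F_G=F\times B_G$ and $H^*(F_G)\cong H^*(F)\otimes\mathbb{Z}_2[t]$ as a ring --- there is no associated-graded subtlety; and your claim that Case II yields only the $\mathbb{F}=\mathbb{R}$ rings is not accurate (the branch $c_1^2=0\neq c_2$ gives $\mathbb{C}P^n\times\mathbb{S}^1$, and the one-generator branch of $c_1^2=c_2$ gives $\mathbb{R}P^{2n+1}$), which is harmless for the final list but indicates the case analysis was not actually traced.
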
 
	\begin{proof}
		Let $x \in  F$ and   $\{a,\cdots, a^n,b, ab,\cdots a^nb\}$  be a  generating set of $H^*(X,x),$ where deg $a=2$ and deg $b=m.$  As in proof of Theorem \ref{thm 3.1},  we choose $\alpha \in H^2(X_G,x_G)$ and $\beta \in H^m(X_G,x_G)$  representing the  generators $a \in H^2(X,x)$ and $b \in H^m(X,x)$ with $\eta_x^*(\alpha)=\eta_x^*(\beta)=0,$ respectively. Let $j^*(\alpha)= B_1 t \otimes c_1 + B_2 1 \otimes c_2,$ where $c_1\in H^1(F,x),$ $c_2 \in H^2 (F,x)$ and $B_1,B_2 \in  \mathbb{Z}_2.$ It is clear that $c_2=a|F.$ Also, we may write 
		\begin{equation*}
			j^*(\beta)=1 \otimes d_m +  t \otimes d_{m-1} + \cdots + t^k \otimes d_{m-k} \cdots + t^{m-2}\otimes d_{2}+ t^{m-1}\otimes d_1,
		\end{equation*} where $d_i \in H^i(F,x).$\\
		If $c_2\neq 0,$   then $B_2=1.$ Clearly, $c_2^{n+1}=0.$  So, we  consider two cases: (1)  $j^*(\alpha)=1 \otimes c_2,$ and (2) $j^*(\alpha)=  t \otimes c_1 + 1 \otimes c_2.$\\
		\textbf{Case (1):} When $j^*(\alpha)=1 \otimes c_2.$\\ In this case, $j^*(\alpha^i)= 1 \otimes c_2^i, 2 \leq i \leq n.$    As $j^*$ is injective, $d_j \neq c_2^j,$ for some $j,$ $1 \leq j \leq n,$ where deg $d_j=j.$ Suppose $d_q=d_{m-k}=d$ is the least degree element such that $d_q \neq c_2^j.$ 
		As $j^*$ is onto on high degrees, for sufficiently large value of $r,$ we can write
		\begin{align*}
			t^{k+r} \otimes d=j^*(A_1 t^{r+m-2}\alpha  +\cdots +A_n t^{r+m-2n}\alpha^n+ A_m t^{r}\beta + \cdots+A_{m+n} t^{r-2n}\alpha^{n} \beta),
		\end{align*}
		where $A_i's$ are in $\mathbb{Z}_2.$ After comparing the coefficient of $t^{k+r}\otimes d,$ we get $A_m=1.$ So, we have 
		$$
		t^{r} \otimes d_m+\cdots +t^{r+k-1} \otimes d_{m-(k-1)}+t^{r+k+1} \otimes d_{m-(k+1)}+ \cdots + t^{r+m-1} \otimes d_1$$
		$$	=-j^*(A_1 t^{r+m-2}\alpha + \cdots + A_n t^{r+m-2n}\alpha^n + A_{m+1} t^{r-2}\alpha \beta  + \cdots+A_{m+n} t^{r-2n}\alpha^{n} \beta)
		$$
		From the above equation, we get that if $q$ is even, then $d_i's$ are zero for $i$ odd. Also, we get  $d_{2i}=c_2^i, 1 \leq i \leq \frac{m-k-2}{2}$  and $ d_{m-(k-l)}= c_2^{l/2}d,$ where $l$ is even, and $2 \leq l \leq k \leq 2n.$ If $q$ is odd then $d_{2i}=c_2^{i}, $  $d_{2i+q}=c_2^{i}d$ where $1\leq i \leq n,$ and zero otherwise. Thus, we get $j^*(\alpha^n\beta )=t^k\otimes c_2^nd.$ As $\alpha^n\beta \neq 0, $ we get  $c_2^nd \neq 0,$ and hence $c_2^id \neq 0$ for $ 1 \leq i \leq n.$ Clearly, if $d^2=0$ then  $F \sim_2 \mathbb{C}P^n \times \mathbb{S}^q, 1 \leq q \leq m.$ This realizes possibility (1) for $\mathbb{F}=\mathbb{C}$. And, if $d^2 \neq 0,$ then we have either $d^2=c_2^{q}$ or $d^2=c_2^
		{q/2}d.$ 
		
		If $d^2=c_2^{q},$ then we get $q\leq n.$  If $q$ is even then by the change of basis $d'=d+c^{q/2},$ we get $d'^2=0 $ and $c_2^id' \neq 0$ for $1 \leq i \leq n.$ Thus, again we get possibility (1) for $\mathbb{F}=\mathbb{C}.$ If $q$ is odd, then $d^{2l}=c_2^{lq}, d^{2l+1}=c_2^{lq}d,$ where $1 \leq  l \leq [\frac{n}{q}].$ Clearly, $d^{2l+2}=0.$ So, we have $c_2^{n+1}=d^{2l+2}=d^2+c_2^q=0,$ where $l=[\frac{n}{q}].$ In particular, if $q=1$ then $F\sim_2 \mathbb{R}P^{2n+1}.$ This realizes possibility (2).
		
		If $d^2=c_2^{q/2}d,$ then $q$ must be $2.$ For that, if $2<q(\mbox{even}) \leq n,$ then $F$ does not satisfy poinca\'{r}e duality.  By the change of basis $d'=d+c_2,$ we get $d'^{n+2}=d^{n+2}=d'^{n+1}+d^{n+1}=dd'=0 .$ Thus, $F\sim_2 \mathbb{C}P^{n+1}\# \mathbb{C}P^{n+1}.$  This realizes possibility (2) for $\mathbb{F}=
		\mathbb{C}.$\\ \textbf{Case (2):} When $j^*(\alpha)=  t \otimes c_1 +  1 \otimes c_2.$ \\
		In this case, we consider two subcases (i) $c_1^2\neq c_2,$ and (ii) $c_1^2 = c_2.$ \\
		\noindent \textbf{Subcase (i):} Assume that $c_1^2\neq c_2.$ Note that  $H^*(F)$  is generated by $c_1$ and $c_2$ \cite{puppe}.
		Clearly, $d_1=c_1$ in $j^*(\beta).$ As $j^*$ is onto in high degrees, for sufficiently large value of $r,$ we get 
		\begin{align*}
			t^{r+(m-1)} \otimes c_1=j^*(A_1 t^{r+m-2}\alpha  +\cdots +A_n t^{r+m-2n}\alpha^n+ A_m t^{r}\beta + \cdots+A_{m+n} t^{r-2n}\alpha^{n} \beta),
		\end{align*}
		where $A_i's$ are in $\mathbb{Z}_2.$ After comparing the coefficient of $t^{r+m-1}\otimes c_1,$ we get $A_m=1.$ Consequently, $$t^{r} \otimes d_m + \cdots  t^{r+m-2} \otimes d_2
		=j^*(\sum_{i=1}^{n}A_i t^{r+m-2i}\alpha^i + \cdots   +\sum_{i=1}^{n} A_{m+i} t^{r-2i}\alpha^i \beta ).$$
		Now, we have divided this subcase according to $c_1^2=0$ or $c_1^2\neq 0.$\\
		First, we consider  $c_1^2=0.$ In this case, we get 
		\[
		j^*(\alpha^n)=
		\begin{cases}
			1 \otimes c_2^n & \mbox{if $n$ is even} \\
			1\otimes c_2^n + t \otimes c_2^{n-1}c_1 & \text{if $n$ is odd}. \\
		\end{cases}
		\]
		As $j^*(\alpha^n) \neq 0,$ $c_2^n\neq 0,$ for $n$ even. Further, if  $n$ is  odd then $c_2^n\neq 0,$ otherwise, rk$H^*(F)=2n<2n+2,$ which contradicts that $X$ is TNHZ in $X_G.$ This gives  that $d_{2k}=c_2^k,$ $d_{2k+1}=c_2^k c_1,$ where $1 \leq k \leq n.$ As $c_2^{n+1}=0,$  we get
		
		\[
		j^*(\alpha^n \beta)=
		\begin{cases}
			t^{m-1}\otimes c_2^n c_1 & \mbox{if $n$ is even} \\
			0 & \text{if $n$ is odd}. \\
		\end{cases}
		\]
		Thus, $n$ must be even. Clearly, $\{1,c_1,c_2\cdots ,c_2^n, c_2c_1,\cdots c_2^{n}c_1\}$ forms a generating set of $H^*(F).$ So, $F \sim_2 \mathbb{C}P^n \times \mathbb{S}^1.$ This realizes possibility (1) for $\mathbb{F}= \mathbb{C}$ and $q=1,$ when $n$ is even. \\
		Next, we consider $c_1^2\neq 0.$
		
		First, we assume that  $c_2^2=0.$ Then, $j^*(\alpha^n)=t^n \otimes c_1^n$ if $n$ is even, and $j^*(\alpha^n)=t^{n-1}\otimes c_1^{n-1}c_2+t^n \otimes c_1^n$ if $n$ is odd. We must have $c_1^n\neq 0.$ If the cup product $c_1c_2=0,$ then $F$ does not satisfy poincar\'{e} duality. So, $c_1c_2\neq 0.$ If $c_1^{n+1}\neq 0,$ then rk $H^*(F)>2n+2,$ a contradiction. So, we must have  $c_1^{n+1}=0.$ It is easy to observe that $ d_k=c_1^k +c_1^{k-2}c_2,$ where $2 \leq k \leq n+2.$ Thus, $j^*(\alpha^n\beta)=t^{m-2}\otimes c_1^nc_2 ,$ if $n$ is even, and $j^*(\alpha^n \beta )=0$ if $n$ is odd. So, $n$ must be even and $c_1^nc_2 \neq 0.$ Thus, $F\sim_2 \mathbb{R}P^n \times \mathbb{S}^2.$ This realizes possibility (1) for  $\mathbb{F}=\mathbb{R}$ and $q=2,$ when $n$ is even.
		
		Now, we consider $c_2^2\neq 0,$  we get $j^*(\alpha ^n)= \sum_{r=0}^{n} {{n}\choose{r}} t^r \otimes c_2^{n-r} c_1^r,$ and $d_{m-i}= \bigoplus_{2l+j=m-i,l,j\geq 0}A_{l,j}c_2^lc_1^j, l=j\ne 0 ~\&~A_{l,j} \in \mathbb{Z}_2.$ So, we have $$j^*(\alpha^n \beta )= \sum_{r=0}^{n}\sum_{i=0}^{m-1}{{n}\choose{r}} t^{r+i} \otimes d_{m-i}c_2^{n-r}c_1^r.$$
		In the above expression, it is not difficult to observe that for $0\leq r <n-1,$ the lowest degree terms in $c_1$ and $c_2$ are $c_2^{n-r}c_1^{r+1}. $ As $F$ satisfies poincar\'{e} duality, $c_2^{n-r}c_1^{r+1}\neq 0 \implies$ rk $H^i(F)>2n+2,$ a contradiction. We also note that if $c_2c_1^{n+1}\neq 0 $ or $c_2^2c_1^{n}\neq 0,$ then rk $H^*(F)>2n+2,$ a contradiction. Thus, $c_2^{n-r}c_1^{r+1}= 0 $ for $0\leq r <n-1,$  $c_2c_1^{n+1}=0 $ and $c_2^2c_1^{n}= 0.$   So, we get $$j^*(\alpha ^n \beta )= t^{n+m-2}\otimes c_2c_1^n+ \sum_{j=1}^{m}t^{n+m-j}\otimes c_1^{n+j}.$$
		As $j^*$ is injective, at least one of $c_1^nc_2$ and $c_1^{n+1}$ must be  nonzero. So,
		if the cup product $c_1c_2=0$ then $c_1^{n+1}$ must be non zero. Let $r$ be the formal dimension of $F.$ Then, $c_1^r \neq 0.$   If $c_1^{2i}= c_2^i$ for $2i<r$ then $ c_2^ic_1=c_1^{2i+1}\neq 0,$ a contradiction. Thus, $ c_1^{2i}\neq c_2^i$ for $2i<r.$ Hence, we must have  $c_1^{r}=c_2^{r/2}$ and $r$ is even. As rk $H^*(F)=2n+2,$ we get  $r+r/2=2n+2 \hspace{-.2cm}\implies \hspace{-.2cm}3|(n+1).$ So, we get   $n=3k-1, k \in \mathbb{N}.$ Thus,  $c_1^{4k+1}=c_2^{2k+1}=c_1^{4k}+c_2^{2k}=c_1c_2=0.$ This realizes possibility (4) for $q=2.$  
		
		Now, if the cup product $c_1c_2 \neq 0,$ then we consider possibilities according as  $ c_1^{n+1}= 0$ or      $c_1^{n+1} \neq 0.$

		Let $c_1^{n+1}= 0.$ Then, we must have $c_1^nc_2\neq 0.$ This case holds  only for $n>3.$ It is easy to observe that $c_2^2=c_1^4$ and $c_1^i \neq c_1^{i-2}c_2, 3 \leq i \leq n.$ So, we get $H^i(F)$ is generated by $c_1^i$ and $c_1^{i-2}c_2$ for $2 \leq i \leq n,$ and $H^{n+1}(F)$ and $H^{n+2}(F)$ are generated by $c_1^{n-1}c_2$ and $c_1^nc_2,$ respectively. Thus, the cohomology ring $H^*(F)$ is generated by $c_1$ and $c_2$ with $c_1^{n+1}=c_2^2+c_1^4=0.$ By the change of basis $d'=c_1^2+c_2,$ we get $d'^2=0$ and $c_1^nd' \neq 0.$ This realizes possibility (1) for $\mathbb{F}=\mathbb{R}$ and $q=2.$

		Let $ c_1^{n+1} \neq 0.$ Then, either $c_1^nc_2\neq 0$ or $c_1^nc_2= 0.$ Let $r$ be the formal dimension of $F.$ Then, we must have $c_1^r\neq 0.$ And if $c_1^i = c_1^{i-2}c_2$ for any  $3 \leq i \leq r,$ then $c_1^{r-2}$ would have  two poincar\'{e} duals namely, $c_1^2$ and $c_2,$ a contradiction. Thus,  $c_1^i \neq c_1^{i-2}c_2$ for $3 \leq i \leq r.$
		
		Suppose that $c_1^nc_2\neq 0.$ As rk $ H^*(F)=2n+2,$  we must have $r=n+2,$ and $c_1^{n+1}=c_1^{n-1}c_2,$ a contradiction.

		Next, suppose that $c_1^nc_2= 0.$  As the cup product $c_1c_2\neq 0,$ we get $c_1^r=c_1^{r-2j}c_2^j,j>1$ which generates  $H^r(F).$  Thus,    $c_1^{2i}\neq c_2^i$ for $1\leq i \leq j-1$  and $c_1^{2j}= c_2^j$ where $2j<r.$ We get rk $H^*(F)=j(r-2j)+2j+j$ which must be  $2n+2.$ So, $r= \frac{2n+2}{j}+2j-3,$  where $n+1= jk,$ for some $k\in \mathbb{N},$ or $j=2.$  Clearly, $c_1^{r-2j+1}c_2=0.$ Hence, the cohomology ring $H^*(F)$ is generated by $c_1, c_2$ with $c_1^{r+1}=c_1^{2j}+c_2^{j}=c_1^{r-2j+1}c_2=0,$ with $\frac{3j}{2}-1<n.$ This realizes possibility (5) for $q=2.$

		\textbf{Subcase (ii):} Assume that $c_1^2=c_2.$ Recall that $H^*(F)$ has at most two generators. \\ If $H^*(F)$ has  one generator, then it is generated by $c_1.$ As rk $H^*(F)=2n+2,$ we get    $c_1^{2n+1}\neq 0$ and $c_1^{2n+2}= 0.$ Hence, $F\sim_{2} \mathbb{R}P^{2n+1}.$ This realizes possibility (2) for $q=1.$ \\ If $H^*(F)$ has two generators, then $j^*(\alpha^n)= ( t \otimes c_1+1 \otimes c_1^2 )^n= \sum_{r=0}^{n} {{n}\choose{r}}t^{r}\otimes  c_1^{2n-r}.$ So, $c_1^n$ must be non zero. Now, we divide this subcase according as  $c_1^{n+1}$ is zero or nonzero.\\
		First, we assume that $c_1^{n+1}=0.$ 
		
		In this case, we have  $j^*(\alpha^n)=t^n \otimes c_1^n .$ 
		Let $d=d_{m-k}=d_q$ be the least degree element such that $d_{q}\neq {c_1}^i$ for some $i,1 \leq i \leq n.$ We have $d_i=0~\forall ~i>m.$  By the similar calculations done in Case (1),  we evaluate $d_i's$ for $i\leq m.$   For $q\leq n,$  $d_i=c_1^i$~ if $1 \leq i \leq q-1,$ $d_i=c_1^{i-q} d+c^i$ if $q+1\leq i \leq n~\&~$$d_i=c_1^{i-q}d$ if $n+1 \leq i \leq n+q,$ and zero otherwise. And for $q>n,$  $d_i=c_1^i,~\mbox{if}~1 \leq i \leq n,d_i=0,~\mbox{if}~ n+1\leq i \leq q-1~\&~ d_i=c_1^{i-q}d, ~\mbox{if}~q+1\leq i \leq q+n,$ and zero otherwise. We have $j^*(\alpha^n \beta)=\sum_{i=0}^{m-1}t^{i+n}\otimes d_{m-i}c_1^n.$ As $c_1^{n+1}=0,$ we get $j^*(\alpha^n\beta)=t^{k+n}\otimes c_1^nd.$ Clearly, $ c_1^{n}d\neq 0.$ If $d^2=0,$ then $F\sim_2 \mathbb{R}P^n \times \mathbb{S}^q, 1\leq q \leq m.$ This realizes possibility (1) for $\mathbb{F}=\mathbb{R}.$ If $d^2 \neq 0,$ then we have either $d^2=c_1^qd$ or $d^2=c_1^{2q}.$\\ 
		\indent Let $d^2=c_1^qd.$ As done earlier, $q$ must be $1.$ By the change of basis $d'=d+c_1,$ we get $d'^{n+1}+d^{n+1}=d'^{n+2}=d^{n+2}=dd'=0$. Thus, $F\sim_2 \mathbb{R}P^{n+1}\# \mathbb{R}P^{n+1}.$ This realizes possibility (2) for $\mathbb{F}=\mathbb{R}.$ 
		
		Let $d^2=c_1^{2q}.$ This is possible only when $2q\leq n.$ By the change of basis $d'=d+c_1^q,$ we get $d'^2=0$ and $c_1^nd'=c_1^nd\neq 0.$ Thus, $F\sim_2 \mathbb{R}P^n\times \mathbb{S}^q, 1 \leq q \leq m.$ This realizes possibility (1) for $\mathbb{F}=\mathbb{R}.$\\
		Now, we assume that $c_1^{n+1}\neq 0.$

		In this case, $j^*(\alpha^n\beta)=\sum_{r=0}^{n}\sum_{i=0}^{m-1}{{n}\choose{r}}t^{r+i}\otimes c_1^{2n-r}d_{m-i}.$ Let   $d_{m-k}=$ $d_q=d$ be the least degree element  which is not a power of $c_1.$ Clearly, $d_{m-i}=\bigoplus_{l+qj=m-i} A_{l,j}c_1^ld^j,$ where $l\geq 0,j \in \{0,1\}, l=j\neq 0.$  So, we get \begin{equation*}
			j^*(\alpha^n \beta )=\sum_{r=0}^{n}\sum_{i=1}^{m-1}{{n}\choose{r}}t^{r+i}\otimes c_1^{2n-r+m-i}+\sum_{r=0}^{n}\sum_{i=q}^{m-1}A_{l,j}{{n}\choose{r}}t^{r+i}\otimes (\oplus_{l+q=m-i}c_1^{2n-r+l}d).
		\end{equation*}
		Note that the least degree terms in the image of $j^*({\alpha^n\beta})$ are $c_1^{n+1}$ and $c_1^nd.$
		
		Further, if the cup product $c_1d=0,$ then we must have $d^2\neq 0, $ otherwise, we cannot have poincar\'{e} dual of $d,$ a contradiction. It is easy to observe that   $c_1^r=d^{\frac{r}{q}}$ is the generator of $H^r(F),$ where $r$ is the formal dimension of $H^*(F).$ As rk$H^*(F)=2n+2,$ we get $r= \frac{q(2n+2)}{q+1}.$ So, $(q+1)|(2n+2),$ and hence, $n=(q+1)k-1$ for $q$ even $\&$ $n=(\frac{q+1}{2})k-1$ for $q$ odd.
		Thus, $c_1^{r+1}=c_1^r+d^{\frac{r}{q}}=c_1d=0.$ This realizes possibility (4).	
		
		Next, if the cup product $c_1d$ is nonzero, then either $c_1^nd \neq 0$ or $c_1^nd=0.$ Let $r$ be the formal dimension of $F.$ In this case,  
		we show that the generators of $H^r(F)$ would be $c_1^{r}$ which is equal to   $c_1^{r-qj}d^j,$ where $j>1.$  Assume that $c_1^{r}= 0.$  Now, if $c_1^{r-q}d$ is generator of $H^r(F),$ then $c_1^i \neq c_1^{i-q}d, q <i <r,$ otherwise, we get $c_1^r=c_1^{r-q}d$, a contradiction. Thus, rk $H^*(F)\geq 2n+4>2n+2,$ which contradicts our hypothesis. Similarly, $c_1^{r-qj}d^j,$  $j>1,$ cannot be generator of $H^r(F).$ Thus, $c_1^r\neq 0.$  Further, if $c_1^r=c_1^{r-q}d$ is a generator of $H^r(F),$ then $c_1^{r-q}$  would have two poincar\'{e} duals namely, $c_1^q$ and $d,$ again a contradiction. Hence, $c_1^r=c_1^{r-qj}d^j$ is generator of $H^r(F),$ where $j>1.$
		
		Let $c_1^nd \neq 0.$ It is easy to observe that rk $H^*(F) >2n+2,$ a contradiction.
		
		Let $c_1^nd=0.$ In this case, we must have  $q\leq n.$   As cup product $c_1d\neq 0,$ we get $c_1^r=c_1^{r-qj}d^j,j>1$ which generates  $H^r(F).$  Thus,    $c_1^{qi}\neq d^i$ for $1\leq i \leq j-1$  and $c_1^{qj}= d^j,$ where $qj<r.$ We get rk $H^*(F)=j(r-qj)+qj+j$ which must be  $2n+2$ so, $r= \frac{2n+2}{j}+qj-(q+1),$ where $n+1= jk,$ for some $k\in \mathbb{N},$ or $j=2.$  Hence, the cohomology ring $H^*(F)$ is generated by $c_1, d$ with $c_1^{r+1}=c_1^{qj}+d^{j}=c_1^{r-qj+1}d=0.$ As $qj<r,$ we get  $\frac{(q+1)j}{2}-1<n.$ This realizes possibility (5).

		\noindent Finally, if $c_2 =0$ or  \{$c_2 \neq 0~\&~ B_2=0$\},  then by Proposition \ref{prop 2.4}, this case holds only for  $n=1.$ So,   $j^*(\alpha )= t \otimes c_1.$   Thus, we have   rk $H^*(F)=4.$  Clearly,  for one generator $F\sim_2 \mathbb{R}P^3.$ Suppose  $H^*(F)$ has two generators. Let $d_q$ be the least degree element such that $d_q\neq c_1.$ Then, for $q>1,$ $j^*(\alpha \beta )=t^{m-q}\otimes c_1d_q\neq 0.$ Thus, we must have  $F\sim_2 \mathbb{S}^1\times \mathbb{S}^q, 1< q \leq m .$  For $q=1,$ if  $d_1^2=0=c_1^2,$  then we get $F\sim_2 \mathbb{S}^1\times \mathbb{S}^1$ and if $d_1^2=c_1d_1$ or $c
		_1^2=c_1d_1,$ then by change of basis $d'=c_1+d_1,$ we get  $F\sim_2 \mathbb{R}P^2 \# \mathbb{R}P^2.$ This realizes possibilities (1),(3) and (4) for $n=1.$
	\end{proof} 
	\begin{remark}
		For $n=1,$ we get $X \sim_2 \mathbb{S}^2 \times \mathbb{S}^m.$ By Theorem \ref{thm 3.2},  the possibilities of connected  fixed point sets $F$ of involutions on $X$ are   $\mathbb{S}^r \times \mathbb{S}^q, 1\leq r \leq 2~\&~1 \leq q \leq m,$ or $\mathbb{R}P^3,$ or  $ \mathbb{F}P^2 \# \mathbb{F}P^2, \mathbb{F}=\mathbb{R}$ or $\mathbb{C}.$ These possibilities have also been realized in [Theorem 3.11, \cite{j1}].
	\end{remark}	 
	

	\noindent Now, we determine the fixed point sets of involutions on $X$  when  $X$ is not TNHZ in $X_G$ under some assumptions on the associated Lerey-Serre spectral sequence of Borel fibration $ X \hookrightarrow X_G \rightarrow B_G.$   
	\begin{theorem}\label{3.10}
		Let $G=\mathbb{Z}_2$ act on a finite CW-complex $X \sim_2 \mathbb{F}P^n \times \mathbb{S}^m,$ where $\mathbb{F}=\mathbb{R}$ or $
		\mathbb{C},$ and $X$ is not TNHZ in $X_G.$ If the associated Leray-Serre spectral sequence is nondegenerate and the differentials $d_r$ of the spectral sequence satisfies $d_r(1\otimes b)=0~ \forall ~r \leq m.$   Then, $F\sim_2 \mathbb{S}^q,$ where $-1\leq q \leq ln+m$ and $l=1$ or $2$ for $\mathbb{F}=\mathbb{R}$ or $\mathbb{C},$ respectively.
		
	\end{theorem}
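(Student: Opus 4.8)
The plan is to play the survival of the base classes $t^p\in H^p(B_G;\mathbb{Z}_2)$ against the nondegeneracy of the spectral sequence. If $F=\emptyset$ there is nothing to prove, since $\mathbb{S}^{-1}=\emptyset$; so assume $F\neq\emptyset$ and fix $x\in F$. Then the section $\eta_x$ splits $\pi^*$, so $\pi^*\colon H^*(B_G;\mathbb{Z}_2)\hookrightarrow H^*(X_G;\mathbb{Z}_2)$ is injective. Equivalently, in the Leray--Serre spectral sequence of $X\hookrightarrow X_G\to B_G$ every power $t^p$ is a permanent cocycle and $E_\infty^{p,0}=\mathbb{Z}_2$ for all $p\geq 0$; in particular, no differential can hit a nonzero multiple of a pure base class $t^j$.

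Next I would locate the differentials. Since $H^*(X;\mathbb{Z}_2)=\mathbb{Z}_2[a,b]/(a^{n+1},b^2)$ is generated as a ring by the classes of $a$ (degree $l$) and $b$ (degree $m$), and every $d_r$ is a derivation with $d_r(t)=0$, a differential off the fibre column is determined by its values on $1\otimes a$ and $1\otimes b$ (when $G$ acts trivially on $H^*(X;\mathbb{Z}_2)$; otherwise one first replaces the fibre column by the invariant ring $\ker(1+g^*)$ via Proposition \ref{E_2} and uses its generators). By hypothesis $d_r(1\otimes b)=0$ for $r\leq m$, and the only remaining possibility on $1\otimes b$ is the transgression $d_{m+1}(1\otimes b)\in E_{m+1}^{m+1,0}$, which is forbidden by the previous paragraph; hence $1\otimes b$ is a permanent cocycle. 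Nondegeneracy then forces $1\otimes a$ to support a nonzero differential, and the transgression $d_{l+1}(1\otimes a)=t^{l+1}$ is again forbidden; so the first nonzero differential must be $d_s(1\otimes a)=t^s\otimes w$ with $2\leq s\leq l$ and $w$ a nonzero class of $H^{l-s+1}(X;\mathbb{Z}_2)$ of positive degree (when $\mathbb{F}=\mathbb{R}$, $l=1$, so this range is empty and the case $F\neq\emptyset$ cannot arise). The derivation property together with the relations $a^{n+1}=0=b^2$ then propagates this to the whole spectral sequence (and forces a parity condition on $n$ from $d_s(1\otimes a^{n+1})=0$).

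The final step is to compute $E_\infty$ and conclude. Running the forced differentials, and (where the first one leaves room) the later ones, each again constrained by the indestructibility of the $t^p$, one checks that for $p\gg 0$ the entries $E_\infty^{p,q}$ are nonzero for exactly two values of $q$ and are then $\mathbb{Z}_2$: one in fibre degree $0$ (detecting $t^p$) and one in some fibre degree $q$ with $0\leq q\leq ln+m$, where $ln+m=\deg(a^n b)$ is the top nonzero degree of $H^*(X;\mathbb{Z}_2)$. Hence $\dim_{\mathbb{Z}_2}H^k(X_G;\mathbb{Z}_2)=2$ for all large $k$. On the other hand $H^i(X;\mathbb{Z}_2)=0$ for $i>ln+m$, so Proposition \ref{thm 2} with $A=\emptyset$ gives an isomorphism $H^k(X_G;\mathbb{Z}_2)\to H^k(F_G;\mathbb{Z}_2)$ for all $k>ln+m$; and since $G$ acts trivially on $F$, $F_G\cong F\times B_G$, so $H^k(F_G;\mathbb{Z}_2)\cong\bigoplus_i H^{k-i}(B_G;\mathbb{Z}_2)\otimes H^i(F;\mathbb{Z}_2)$, whose total dimension stabilizes to $\operatorname{rk}H^*(F;\mathbb{Z}_2)$. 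Comparing, $\operatorname{rk}H^*(F;\mathbb{Z}_2)=2$; a nonempty space of total mod $2$ rank $2$ is either a pair of mod $2$ acyclic components (so $\sim_2\mathbb{S}^0$) or is connected with a single extra generator in a degree $q>0$ whose square lands in the zero group $H^{2q}(F;\mathbb{Z}_2)$ — so $\sim_2\mathbb{S}^q$ — and Bredon's Poincar\'e duality for the components of $F$ is consistent with this. Together with the case $F=\emptyset$ this yields $F\sim_2\mathbb{S}^q$ with $-1\leq q\leq ln+m$.

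The main obstacle is the bookkeeping in the last two steps. It splits into cases: $G$ may act trivially on $H^*(X;\mathbb{Z}_2)$, or it may realize a nontrivial order-two ring automorphism of $\mathbb{Z}_2[a,b]/(a^{n+1},b^2)$ — such as $a\mapsto a,\ b\mapsto b+a^{m/l}$ (possible only when $l\mid m$ and $2m/l\geq n+1$), or $a\mapsto a+b,\ b\mapsto b$ when $l=m$ — and in several of these cases one must follow more than one successive differential and verify that in the end precisely two ``towers'' of $E_\infty$ reach arbitrarily high base degree. Arranging the argument so that the survival of the classes $t^p$ uniformly eliminates all the other potential towers, and checking the required consistency (parity of $n$, the constraints on $m$) in each case, is where the real work lies.
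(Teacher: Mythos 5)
Your opening move is correct and is genuinely different from the paper's: when $F\neq\emptyset$ the section $\eta_x$ splits $\pi^*$, so the whole bottom row $E_\infty^{p,0}$ survives and no differential may land in it. The paper never invokes this; instead it lets the transgressions $d_{l+1}(1\otimes a)=t^{l+1}\otimes 1$ or $d_{m+1}(1\otimes b)=t^{m+1}\otimes 1$ kill the bottom row, counts that either zero or two lines of $E_\infty$ persist in high total degree, and then reads off $\operatorname{rk}H^*(F)\in\{0,2\}$ from Proposition \ref{thm 2} together with $F_G\simeq F\times B_G$. Note that in every case where the paper finds two surviving lines, those lines sit in fibre degrees $ln$ and $ln+m$ (or similar), \emph{not} in fibre degree $0$ — so your lemma shows those configurations are incompatible with $F\neq\emptyset$, while the paper's localization argument shows they force $F\neq\emptyset$. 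The two arguments are therefore not merely different routes to the same case list; your (correct) observation implies that the paper's ``$q\geq 0$'' cases can only occur vacuously, and it pushes all of the actual work into a different, much narrower set of possible differentials.

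That is where the gap is. Having forbidden transgressions into the base, you are left (for trivial coefficients) with $d_s(1\otimes a)=t^s\otimes w$, $2\leq s\leq l$, $w\in H^{l-s+1}(X)$ nonzero; for $\mathbb{F}=\mathbb{C}$ this means $m=1$ and $d_2(1\otimes a)=t^2\otimes b$. Your third paragraph then asserts, without computation, that ``exactly two towers'' survive. But run the Leibniz rule in that case: compatibility with $a^{n+1}=0$ forces $n$ odd, $d_2(1\otimes a^i)=i\,t^2\otimes a^{i-1}b$ and $d_2(1\otimes a^ib)=0$, so $E_3$ retains the towers $1\otimes a^{2i}$ and $1\otimes a^{2j+1}b$; each of these is a product of the permanent cocycles $1\otimes a^2$ and $1\otimes ab$ (any later differential on these would land either in a vanishing group or in the protected bottom row), so $E_3=E_\infty$ with $n+1$ surviving towers, giving $\operatorname{rk}H^*(F)=n+1$, not $2$. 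So in the one concrete case your own reduction leaves open, the asserted conclusion fails unless you additionally prove that no such action exists — and nothing in the proposal does that. The same incompleteness affects the nontrivial-coefficient case, which you dispatch in a parenthesis (``replace the fibre column by the invariant ring''): that is where the paper spends essentially its entire proof, the invariant subquotient $\ker(1+g^*)/\operatorname{im}(1+g^*)$ is no longer generated in a way that lets you quote the two-generator Leibniz argument verbatim, and the surviving-tower count has to be redone from scratch there. As it stands the proposal is a sound skeleton with the decisive computations missing, and at least one of them appears to come out against the claimed answer.
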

	\begin{proof}
		As $E_2 \neq E_{\infty}$ and 
		$d_r(1\otimes b) =0~~\forall ~r\leq m,$ we must have either $d_{l+1}(1\otimes a)\neq 0$  or $d_{m+1}(1\otimes b) \neq 0,$ where deg $a=l$ and deg $b=m.$ We consider two cases according as $\pi_1(B_G)$ acts trivially or nontrivially on $H^*(X).$\\
		First, suppose that $\pi_1(B_G)$ acts trivially on $H^*(X).$  Clearly, if $\{d_{l+1}(1\otimes a) =0~\&~d_{m+1}(1\otimes b) \neq 0\}$ or $\{d_{l+1}(1\otimes a) \neq 0~\&$ $n$ is odd\}, then no line of the spectral sequence survive to $E_{\infty}$-term. Thus, $F = \emptyset.$ This realizes theorem for $q=-1.$  If  $d_{l+1}(1\otimes a)\neq 0$ and $n$ is even, then  two lines survive to $E_{\infty}$-term. Thus, we get rk $H^k(X_G)=$ rk $H^k(F_G)= $rk $H^*(F)=2,$  for sufficiently large $k.$ Hence, $F \sim_2 \mathbb{S}^q, 0\leq q \leq ln+m,$  where deg $a =l,$ $l=1$  or $2$ for $\mathbb{F}=\mathbb{R}$ or $\mathbb{C},$ respectively.\\
		Next, suppose that $\pi_1(B_G)$ acts nontrivially on $H^*(X).$ This is possible only when $m \leq ln$ and  $m\equiv 0$(mod $l$). Now, we consider two cases (i) $l=m< ln$ (ii) $l<m\leq ln.$ \\
		\textbf{Case(i)}: Assume that $l=m< ln.$ If $n$  is even, then the orders of $a,b$ and $a+b$ are $n+1,2$ and $n+2,$ respectively. Thus, $g^*=1 $ on $H^*(X),$ which contradicts our hypothesis. If $n$ is odd, then the orders of $a,b$ and $a+b$ are $n+1,2$ and $n+1,$ respectively. Thus, the only nontrivial action of $g^*$ on $H^m(X)$ is define by $g^*(b)=b$ and $g^*(a)=a+b.$ By Proposition \ref{E_2}, we get 
		\[
		E_2^{0,li}\cong 
		\begin{cases}
			\mathbb{Z}_2=<a^{i-1}b> &  k=0, 1\leq i(odd)\leq n \\
			\mathbb{Z}_2\oplus \mathbb{Z}_2 =<a^i,a^{i-1}b> & k\geq 0,~1<i(even)< n\\
			\mathbb{Z}_2 & k\geq 0~\&~i=0,n+1.
		\end{cases}
		\]
		By Proposition \ref{prop 5}, we have $ag^*(a)=a^2+ab$ is a permanant cocycle. If $d_{l+1}(b)= t^{l+1}\otimes 1,$ then $0=d_{l+1}(b(t\otimes 1))=t^{l+1}\otimes 1,$ a contracdiction. So, we get $d_{l+1}(b)=0.$ As $E_2 \neq E_{\infty}$ and  $a^2+ab$ is permanent cocycle,  we must have  $d_{2l+1}(a^2)=d_{2l+1}(ab)=t^{2l+1}\otimes 1.$ Consequently, for $2 \leq i(even)\leq n-1,$	
		\[
		d_{2l+1}(a^{i})= 
		\begin{cases}
			0 &  i \equiv 0 ~(mod ~4) \\
			(t^{2l+1}\otimes 1)a^{i-2} & i \not\equiv 0 ~(mod ~4)
		\end{cases}
		\]
		and for $1\leq i(odd) \leq n,$
		\[
		d_{2l+1}(a^{i}b)= 
		\begin{cases}
			(t^{2l+1}\otimes 1)(a^{i-1}+a^{i-2}b) &  i+1 \equiv 0 ~(mod ~4) \\
			(t^{2l+1}\otimes 1)a^{i-1} & i+1 \not\equiv 0~ (mod ~4)
		\end{cases}
		\]
		So, we have  $E_{2l+2}^{*,*}=E_{\infty}^{*,*}.$ Thus, no line  of the spectral sequence survive to $E_{\infty}$-term. Hence, $F = \emptyset.$  \\
		\textbf{Case(ii):}  Assume that $l<m\leq ln, m \equiv 0$(mod $l$). Further, if $l<m<2m\leq ln,$ then order of $a^{\frac{m}{l}}+b$ is not equal to $2.$ This implies that $\pi_1(B_G)$ acts trivially on $H^*(X).$ So, we consider $l<m\leq ln<2m,$ then we must have $g^*(a^\frac{m}{l})=a^{\frac{m}{l}}$ and $ g^*(b)=a^{\frac{m}{l}}+b.$
		By Proposition \ref{E_2}, the $E_2$-term of Leray-Serre spectral sequence is given by
		\[
		E_2^{k,i}=
		\begin{cases}
			\mathbb{Z}_2 & \mbox{for}~ ~k\geq 0, 0\leq i\equiv 0~(\mbox{mod } l) \leq m-l ~\& ~ ln<i \equiv 0~(\mbox{mod } l)\leq ln+m \\
			\mathbb{Z}_2 & \text{for } k=0, ~ m\leq i \equiv 0~(\mbox{mod } l) \leq ln\\
			0& \mbox{otherwise},
			
		\end{cases}
		\] where $ l=1$ and $2$ for $\mathbb{F}=\mathbb{R}$ and $\mathbb{C},$ respectively.
		As $E_2\neq E_\infty,$ we must have  $d_{l+1}(1\otimes a) = t^{l+1} \otimes 1.$ Clearly, if $m\equiv 0$ (mod $2l$), then  $F= \emptyset.$ If  $m \not \equiv 0$ (mod $2l$), then two lines of the spectral sequence survives to infinity, and hence $F \sim_2 \mathbb{S}^q,$ for $ 0\leq q \leq ln+m.$ Hence, our claim.	
	\end{proof}
	
	Now, we give examples to realizes above Theorems.
	\begin{example}
		Let $G=\mathbb{Z}_2$ act  on $\mathbb{S}^m$ define by $$(x_0,x_1,\cdots x_m) \mapsto (x_0,x_1,\cdots,x_q, -x_{q+1}, \cdots  -x_m) .$$
		If we consider trivial action of  $G$ on $ \mathbb{F}P^n,$   then  after taking diagonal action of $G$ on $\mathbb{F}P^n \times \mathbb{S}^m,$ we get fixed point set is $\mathbb{F}P^n \times \mathbb{S}^q,$ where $ 1\leq q \leq m$ and $ \mathbb{F}=\mathbb{R}$ or $\mathbb{C}.$\\
		If we take conjugation action of  $G$ on $ \mathbb{C}P^n$  i.e $(z_0,z_1,\cdots z_n) \mapsto (\bar{z}_0,\bar{z}_1,\cdots\bar{ z}_n),$ then after taking diagonal action of $G$ on $\mathbb{C}P^n \times \mathbb{S}^m,$ we get fixed point set is $\mathbb{R}P^n \times \mathbb{S}^q,1\leq q \leq m.$ 	\\	 
		This examples realizes possibility (1) of Theorems \ref{thm 3.1} and  \ref{thm 3.2}.
	\end{example}
	\begin{example}
		Bredon (\cite{G.Bredon}) constructed an example for fixed point set $\mathbb{P}^{2}(q)\# \mathbb{P}^2(q),$ (connected sum of projective spaces) of an involution on $\mathbb{S}^q \times 
		\mathbb{S}^{q+k},$ where $q=1,2$ and $k\geq q.$ This example also  realizes possibility (2) of Theorems \ref{thm 3.1} and  \ref{thm 3.2}, for $n=1.$ In this paper, he also gave  examples of involutions on $X\sim_2 \mathbb{S}^n \times \mathbb{S}^m,n\leq m$ with $F=\mathbb{R}P^3,$ and on  $X\sim_2 \mathbb{S}^n \times \mathbb{S}^m,n=1,2~\&~m\geq 2n-1$ with $F\sim_2 \mathbb{S}^{2n-1}.$  These examples also  realizes possibility (3) of Theorems \ref{thm 3.2} and Theorem \ref{3.10},  for $n=1,$ respectively.  
		
	\end{example}
	
	\section{Cohomology  Ring of  The Orbit Space of Free involutions on Product of Projective Space and Sphere}

	In this section, we compute the cohomology ring of the orbit space of free involutions on a space $X$ having mod $2$ cohomology the  product of  projective space and sphere $ \mathbb{F}P^n \times \mathbb{S}^m,$  $\mathbb{F}=\mathbb{R}$ or $\mathbb{C}.$ For the existence of free involutions on $ \mathbb{F}P^n \times \mathbb{S}^m,$ consider the diagonal action on $\mathbb{F}P^n\times \mathbb{S}^m,$ by taking any involution on 	$\mathbb{F}P^n$ and antipodal action on $\mathbb{S}^m.$
	First, we prove the following lemma.
	\begin{lemma}\label{4.1}
		Let $G=\mathbb{Z}_2$ act freely on $X\sim_2 \mathbb{F}P^n\times \mathbb{S}^m,$ where $\mathbb{F}=\mathbb{R}$ or $\mathbb{C},$ and $n,m\geq 1.$ Then, $\pi_1(B_G)$  acts trivially on $H^*(X)$ whenever one of the following holds: (1) $ ln \leq m$ (2) $ l=m<ln, n$ is even, (3) $l<m<2m\leq ln, m\equiv 0$(mod $l$), where $l=1$ or $2$ for $\mathbb{F}=\mathbb{R}$ or $\mathbb{C},$ respectively, and (4) $m\not \equiv 0$(mod $2$) for $\mathbb{F}=\mathbb{C}.$
	\end{lemma}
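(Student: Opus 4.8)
The plan is to combine two ingredients. First, a free action makes $X_G$ homotopy equivalent to the orbit space $X/G$ (Proposition \ref{mac}), so $H^i(X_G;\mathbb{Z}_2)=0$ for $i>ln+m$ (Proposition \ref{prop 4.5}). Second, the generator $\tau$ of the $\pi_1(B_G)$-action on $H^*(X;\mathbb{Z}_2)=\mathbb{Z}_2[a,b]/\langle a^{n+1},b^2\rangle$, where $\deg a=l$ and $\deg b=m$, is a degree-preserving $\mathbb{Z}_2$-algebra automorphism with $\tau^2=\mathrm{id}$. Since $\tau$ is a ring isomorphism it permutes the nonzero homogeneous elements of each fixed degree, preserving their nilpotency orders and the subspace of decomposables; in particular $\tau(a)=a$ whenever $\dim_{\mathbb{Z}_2}H^l(X)=1$ (equivalently $l\neq m$) and $\tau(b)=b$ whenever $\dim_{\mathbb{Z}_2}H^m(X)=1$ (equivalently $l\nmid m$ or $m>ln$).

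I would then treat cases (2), (3), (4) and case (1) with $ln<m$ purely from the ring structure. If $ln<m$, both $H^l(X)$ and $H^m(X)$ are one-dimensional, so $\tau=\mathrm{id}$. If $\mathbb{F}=\mathbb{C}$ and $m$ is odd (case (4)), the even part of $H^*(X)$ is spanned by the $a^j$ and the odd part by the $a^jb$, whence $H^2(X)=\langle a\rangle$, $H^m(X)=\langle b\rangle$, and again $\tau=\mathrm{id}$. If $l=m<ln$ with $n$ even (case (2)), then $H^l(X)=\langle a,b\rangle$ and the three nonzero elements $a,b,a+b$ have nilpotency orders $n+1,\,2,\,n+2$, which are pairwise distinct because $n\geq 2$; hence $\tau$ fixes each of $a$ and $b$. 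If $l<m$, $l\mid m$ and $2m\leq ln$ (case (3)), then $\tau(a)=a$ and $H^m(X)=\langle a^{m/l},b\rangle$; here $a^{m/l}$ and $a^{m/l}+b$ both square to $a^{2m/l}\neq 0$ while $b^2=0$, so $b$ is the unique degree-$m$ element of order $2$, forcing $\tau(b)=b$. In all these cases $\tau$ is the identity, since $H^*(X;\mathbb{Z}_2)$ is generated by $a$ and $b$.

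The remaining and, I expect, the only genuinely delicate case is (1) with $ln=m$. Then $H^m(X)=\langle a^n,b\rangle$ and each of $a^n,b,a^n+b$ squares to zero, so the nilpotency test is vacuous. When $n\geq 2$ one still has $\tau(a)=a$, and since $b$ spans the only indecomposable class in $H^m(X)$ modulo decomposables, $\tau(b)\in\{b,\,a^n+b\}$; when $n=1$ we have $X\sim_2\mathbb{S}^l\times\mathbb{S}^l$ and a priori $\tau$ could permute $a,b,a+b$. Assuming $\tau\neq\mathrm{id}$, after relabelling we may take $\tau(b)=a^n+b$, so that $\pi_1(B_G)$ acts trivially on $H^i(X)$ for $i\neq m$ and acts on $H^m(X)$ by a single nontrivial $2\times 2$ Jordan block. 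By Proposition \ref{E_2} the $E_2$-term of the Leray-Serre spectral sequence of $X\stackrel{i}\hookrightarrow X_G\stackrel{\pi}\rightarrow B_G$ then consists of the free $H^*(B_G)$-modules generated by $a^j$ $(0\leq j\leq n-1)$ and by $a^jb$ $(1\leq j\leq n)$, together with a single $\mathbb{Z}_2$ in bidegree $(0,m)$. Freeness forces $H^*(X_G;\mathbb{Z}_2)\cong H^*(X/G;\mathbb{Z}_2)$ to vanish above degree $ln+m$, so all $2n$ of these infinite $H^*(B_G)$-towers must be truncated by the differentials. The heart of the argument — the step I expect to require the most care — is to show that no choice of the $d_r$'s (which are $H^*(B_G)$-linear derivations, hence constrained by the relations $a\cdot a^j=a^{j+1}$ and $a^n\cdot b=a^nb$) can achieve this once the $H^m$-row has collapsed as above, so that some tower persists to arbitrarily high degree, contradicting the boundedness of $H^*(X/G)$. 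This yields $\tau=\mathrm{id}$ in the last case too. The bookkeeping in this final case is where essentially all the work lies; the preceding cases are formal consequences of the multiplicative structure of $H^*(X;\mathbb{Z}_2)$.
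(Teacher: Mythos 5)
Your handling of cases (2), (3), (4) and of case (1) with $ln<m$ is correct and essentially identical to the paper's: nilpotency orders separate $a$, $b$, $a+b$ when $l=m<ln$ with $n$ even; the fact that $(a^{m/l}+b)^2=a^{2m/l}\neq 0$ while $b^2=0$ settles case (3); and parity of degrees settles case (4). The problem is the case $ln=m$, which you correctly identify as the only delicate one and then do not actually prove. You reduce it to the claim that, once $E_2^{k,m}$ collapses to a single $\mathbb{Z}_2$ in column $0$, no system of differentials can truncate all the remaining $H^*(B_G)$-towers. That claim is not a formal consequence of $H^*(B_G)$-linearity and the Leibniz rule, and in the simplest instance it is false as a piece of pure bookkeeping. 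Take $n=1$, $l=m=1$, $g^*(a)=a$, $g^*(b)=a+b$: by Proposition \ref{E_2} the $E_2$-page has a full tower in row $0$, a single $\mathbb{Z}_2=\langle a\rangle$ at $(0,1)$, and a full tower $\langle t^k\otimes ab\rangle$ in row $2$ (since $g^*(ab)=ab$ and $(1+g^*)H^2=0$). The pattern $d_2(1\otimes a)=t^2\otimes 1$ and $d_3(1\otimes ab)=t^3\otimes 1$ is compatible with multiplicativity (neither $a^2$ nor any product relation constrains it, as $b$ itself does not survive to $E_2$), and it kills both towers, leaving $E_\infty$ concentrated in total degrees $0$ and $1$. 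So your argument, as designed, cannot reach a contradiction; the differential $d_3(1\otimes ab)$ must be shown to vanish by some input external to the spectral-sequence formalism.

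That external input is exactly Proposition \ref{prop 5}, and it is how the paper disposes of this case in two lines: if $g^*\neq 1$ on $H^m(X)$ with $m=ln$, then necessarily $g^*(a^n)=a^n$ and $g^*(b)=a^n+b$ (or, for $n=1$, some automorphism with $bg^*(b)\neq0$), hence $b\,g^*(b)=a^nb\neq 0$ is the top class of $X$; Proposition \ref{prop 5} then asserts both that this class is a permanent cocycle and, more to the point, that the fixed point set is nonempty, contradicting freeness. The permanent-cocycle property of $a\,g^*(a)$ is a theorem of Bredon proved via the Steenrod/norm construction, not by inspecting which differentials are allowed. You should replace the bookkeeping plan for the $ln=m$ case with an appeal to Proposition \ref{prop 5} (or reprove that proposition, which is substantially more work than the rest of the lemma combined).
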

	\begin{proof}
		Let $g$ be the generator of $\pi_1(B_G).$  First, we consider the case $ln=m. $ Assume that $g^*\neq 1 $ on $H^m(X).$ We must have  $g^*(a^m)= a^m $ and  $g^*(b)=a^m+b.$ So, we get $bg^*(b)=a^mb \neq 0.$ By Proposition \ref{prop 5}, the fixed point set is non empty which contradicts our hypothesis. Clearly, if $ln<m$ or   \{$l=m<ln$ and $ n $ is even\}, then $g^*=1 $ on $H^*(X).$  Now, consider  $l<m<2m\leq ln,$ where $m \equiv 0$(mod $l$). Then, $g^*(a^{\frac{m}{l}})=a^{\frac{m}{l}}.$ If  $g^*(b)=a^{\frac{m}{l}}+b,$ then we must have  $a^{{\frac{2m}{l}}}=0,$ which is not  possible. So,  $g^*= 1 $ on $H^*(X).$ Finally, for $\mathbb{F}=\mathbb{C}$ if $m\not \equiv 0 $(mod $2$), then $H^i(X)\cong \mathbb{Z}_2~\forall ~0\leq i\equiv 0($mod $ 2) \leq 2n ~\&~ i=m+q, 0\leq q\equiv 0($mod $2) \leq 2n.$ Thus, $g^*=1$ on $H^*(X).$ 
	\end{proof}
	Now, we determine the cohomology ring of  orbit space $X/G,$ when $\pi_1(B_G)$ acts nontrivally on $H^*(X).$
	\begin{theorem}\label{thm 4.2}
		Let $G=\mathbb{Z}_2$ act freely on a finite CW-complex $X\sim_2\mathbb{F}P^n\times \mathbb{S}^m,$ where   $\mathbb{F}=\mathbb{R}$ or $\mathbb{C}$ and $ n, m\geq 1$ Assume that $\pi_1(B_G)$ acts nontrivially on $H^*(X).$ Then, $H^*(X/G)$ is isomorphic to one of the following graded commutative  algebras:
		\begin{enumerate}
			\item ${\mathbb{Z}_2[x,y,z]}/{<x^{2l+1},y^2+a_0z+a_1x^{2l},z^{\frac{n+1}{2}}+a_2x^{2l}z^{\frac{n-1}{2}},xy>,}$
			where  deg $x=1,$ deg $y=l~\&$ deg $z=2l, a_i\in \mathbb{Z}_2, 0\leq i \leq 2,m=l<ln,$  $n$ odd, and 
			\item ${\mathbb{Z}_2[x,y,z,w_{k}]}/{<x^{l+1},y^{\frac{m}{2l}}+a_0w_1,z^{2},xw_{k},{w_kw_{k+i}}+a_{k,i}x^{dl}y^{\frac{2m-ln+lq}{2l}}z>}, ~$
			where$~$ deg $x=1,$ deg $y=2l,$ deg $z=ln+l,$ deg $w_{k}=m+l(k-1),$ $1\leq k\leq \frac{ln-m+l}{l};$  $0\leq i\leq \frac{ln-m}{l};$  $-1\leq q(odd)\leq \frac{ln-m-2l}{l} ;d=0,1;$ $ $$l<m<ln<2m,m\equiv 0$ (mod $2l$), $n$  odd; and $a_{k,i}=0$ if $\frac{l(n+2)-m}{l}<2k+i,a_0~\& $ ${a_{k,i}}'s$ are in $\mathbb{Z}_2.$
			If $d=0,$ then $i$ is even and $q=2k+i-3.$ If $d=1,$ then $i$ is odd and $q=2k+i-4,$ 
		\end{enumerate}
		where  $l=1$ or $2$ for $\mathbb{F}=\mathbb{R}$ or $\mathbb{C},$ respectively,
	\end{theorem}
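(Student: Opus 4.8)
The plan is to compute $H^{*}(X/G;\mathbb{Z}_{2})$, which by Proposition \ref{mac} equals $H^{*}(X_{G};\mathbb{Z}_{2})$, from the Leray--Serre spectral sequence of the Borel fibration $X\stackrel{i}\hookrightarrow X_{G}\stackrel{\pi}\rightarrow B_{G}$. Here $B_{G}=\mathbb{R}P^{\infty}$, so $H^{*}(B_{G};\mathbb{Z}_{2})=\mathbb{Z}_{2}[t]$ with $\deg t=1$, and we put $x=\pi^{*}(t)$. Since the action is free, Proposition \ref{prop 4.5} gives $H^{i}(X_{G};\mathbb{Z}_{2})=0$ for $i>ln+m$, so the $t$-tower $E_{2}^{*,0}$ cannot survive and the spectral sequence is nondegenerate. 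As $\pi_{1}(B_{G})$ is assumed to act nontrivially on $H^{*}(X)$, Lemma \ref{4.1} leaves exactly the two possibilities displayed in the statement: in case (1), $m=l<ln$, $n$ odd, $g^{*}(a)=a+b$, $g^{*}(b)=b$; in case (2), $l<m<ln<2m$, $m\equiv 0\pmod{2l}$, $n$ odd, $g^{*}(a)=a$, $g^{*}(b)=a^{m/l}+b$. This is precisely the dichotomy of Cases (i), (ii) in the proof of Theorem \ref{3.10}, specialized to $F=\emptyset$.

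For each case I would first read off the $E_{2}$-term from Proposition \ref{E_2} with $\tau=\sigma=1+g^{*}$, using the explicit action above to compute $\ker(1+g^{*})$ and $\operatorname{im}(1+g^{*})$; the result is a thin spectral sequence. In case (1) the classes $1\otimes b$ (degree $l$) and $a\,g^{*}(a)=a^{2}+ab$ (degree $2l$) survive to $E_{\infty}$, and in case (2) the surviving fibre classes are $1\otimes a^{j}$ for $m/l\le j\le n$ together with $1\otimes a^{j}b$ for $n-m/l<j\le n$. I would then determine the differentials. The $t$-tower survives until it is hit: in case (1), since $E_{2}^{1,l}=0$ the product structure forces $d_{l+1}(1\otimes b)=0$, and the first nonzero tower differential is $d_{2l+1}(a^{2})=d_{2l+1}(ab)=t^{2l+1}\otimes 1$ (whence $x^{2l+1}=0$); in case (2), $d_{l+1}(1\otimes a)=t^{l+1}\otimes 1$ (whence $x^{l+1}=0$). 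Proposition \ref{prop 5} forces $a\,g^{*}(a)$ to be a permanent cocycle, which pins down the differentials on the remaining fibre classes exactly as in the proof of Theorem \ref{3.10}, and the constraint $H^{i}(X_{G};\mathbb{Z}_{2})=0$ for $i>ln+m$ determines all higher differentials. Reading off $E_{\infty}$ then gives the ranks and the module structure of $H^{*}(X_{G};\mathbb{Z}_{2})$ over $\mathbb{Z}_{2}[x]/(x^{2l+1})$, resp.\ over $\mathbb{Z}_{2}[x]/(x^{l+1})$.

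Finally I would pick permanent cocycles lifting the surviving fibre generators: in case (1), $y$ of degree $l$ lifting $b$ and $z$ of degree $2l$ lifting $a^{2}+ab$; in case (2), $y$ of degree $2l$ lifting $a^{2}$, $z$ of degree $ln+l$ lifting the lowest surviving $a^{j}b$, and $w_{k}$ of degree $m+l(k-1)$ lifting $a^{m/l+k-1}$ for $1\le k\le n-m/l+1$. The relations $x^{2l+1}=0$, $xy=0$ (case (1)) and $x^{l+1}=0$, $z^{2}=0$, $xw_{k}=0$ (case (2)) are immediate: each such product either restricts to $0$ in the fibre and lies above total degree $ln+m$, or already vanishes in $E_{\infty}$; here $xy=0$ (resp.\ $xw_{k}=0$) is arranged by adjusting the chosen lift, using that $i^{*}(x)=0$. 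The remaining relations $y^{2}+a_{0}z+a_{1}x^{2l}=0$, $z^{(n+1)/2}+a_{2}x^{2l}z^{(n-1)/2}=0$ (resp.\ $y^{m/2l}+a_{0}w_{1}=0$ and $w_{k}w_{k+i}+a_{k,i}x^{dl}y^{(2m-ln+lq)/2l}z=0$) come from resolving the multiplicative extensions: every left-hand side restricts to $0$ in the fibre (in case (2) this uses $2m>ln$, so all products $w_{k}w_{k+i}$ die in the fibre), hence lies in $\ker i^{*}=F^{1}$; a degree count against the available monomials of $H^{*}(X_{G};\mathbb{Z}_{2})$ (using $x^{l+1}=0$, $xw_{k}=0$, $xy=0$) leaves only the displayed correction term, and matching internal degrees forces $q=2k+i-3$ when $d=0$ and $i$ is even, $q=2k+i-4$ when $d=1$ and $i$ is odd, and $a_{k,i}=0$ once $2k+i>(l(n+2)-m)/l$ (the correction then exceeds the top degree). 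The coefficients $a_{0},a_{1},a_{2},a_{k,i}\in\mathbb{Z}_{2}$ are left undetermined because the spectral sequence does not resolve these extensions.

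The main obstacle is this last step in case (2): the generating family $\{w_{k}\}$ makes the multiplication intricate, and one must exploit the $\mathbb{Z}_{2}[x]$-module structure and a careful count of $E_{\infty}$-survivors to show that no relation beyond those listed is needed and that the correction term in each $w_{k}w_{k+i}$ is the single monomial claimed. Handling $\mathbb{F}=\mathbb{R}$ ($l=1$) and $\mathbb{F}=\mathbb{C}$ ($l=2$) uniformly --- in particular checking that the low differentials $d_{2},\dots,d_{l}$ available when $l=2$ contribute nothing to the surviving classes --- requires care but introduces no new idea.
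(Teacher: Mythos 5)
Your proposal follows essentially the same route as the paper: use Lemma \ref{4.1} to reduce to the two cases $m=l<ln$ ($n$ odd) and $l<m<ln<2m$, compute the thin $E_2$-term from Proposition \ref{E_2}, use the permanent cocycle $ag^*(a)$ from Proposition \ref{prop 5} together with freeness to pin down the differentials ($d_{2l+1}$ on the tower in case (1), $d_{l+1}(1\otimes a)=t^{l+1}\otimes 1$ in case (2)), read off $\mathrm{Tot}\,E_\infty^{*,*}$, and lift the same fibre generators ($b$ and $a^2+ab$ in case (1); $a^2$, $a^{m/l+k-1}$, and the lowest surviving $a^jb$ in case (2)), leaving the multiplicative-extension coefficients undetermined in $\mathbb{Z}_2$. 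This matches the paper's proof in both structure and detail, so no further comparison is needed.
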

	\begin{proof}
		By Lemma \ref{4.1}, it is clear that $\pi_1(B_G)$ can act nontrivially on $H^*(X)$ only when (1) $l=m<ln,n$ is odd, and (2) $l<m<ln<2m, m\equiv 0$ (mod $l$), where $l=1$ and $2$ for $\mathbb{F}=\mathbb{R}$ and $\mathbb{C},$ respectively. \\
		First, we consider $l=m<ln,$ where $n$ is odd. By Theorem \ref{3.10}, we get $E_{2l+2}^{*,*}=E_{\infty}^{*,*}.$ Clearly, $E^{k,li}_{\infty}\cong \mathbb{Z}_2, 0 \leq k \leq 2l, 1<i(even)<n ~\&~ k=0, 1\leq i(odd)\leq n ;$ and trivial otherwise. So,  we have  	
		
		\[
		H^j(X_G)=
		\begin{cases}
			\mathbb{Z}_2 \oplus \mathbb{Z}_2  &  0< j\equiv 0(\mbox{mod} ~l) \leq ln\\
			\mathbb{Z}_2 & 0<j\not \equiv 0(\mbox{mod}~ l) <ln+l~\&~j=0,ln+l\\
			0 & \text{otherwise.}
		\end{cases}
		\]
		Let $x\in E^{1,0}_{\infty},$ $u \in E^{0,l}_{\infty}$ and $v \in E_{\infty}^{0,2l}$    be the elements corresponding to permanent cocycles $t\otimes 1\in E_2^{1,0} ,$ $b \in E_2^{0,l}$ and  $a^2+ab \in E_2^{0,2l},$  respectively. Clearly, $x^{2l+1}=u^2=v^{\frac{n+1}{2}}=xu=0.$ Thus, the total complex $$\mbox{Tot} ~E_{\infty}^{*,*}={\mathbb{Z}_2[x,u,v]}/{<x^{2l+1},u^2,v^{\frac{n+1}{2}},xu>}, $$ where deg $x=1,$ deg $u=l$ and deg $v=2l.$ Suppose that  $y\in H^l(X_G)$  and  $z\in H^{2l}(X_G)$ corresponding to $u$ and $v$ such that $i^*(y)=b$ and  $i^*(z)=a^2+ab,$ respectively. Clearly, we have  $x^{2l+1}=y^2+{a_0}z+a_1x^2=z^{\frac{n+1}{2}}+a_2x^{2l}z^{\frac{n-1}{2}}=xy=0.$ Hence, the cohomology ring $ H^*(X_G)$ is given by 	$${\mathbb{Z}_2[x,y,z]}/{<x^{2l+1},y^2+a_0z+a_1x^2,z^{\frac{n+1}{2}}+a_2x^{2l}z^{\frac{n-1}{2}},xy>,}$$ 
		where deg $x=1,$ deg $y=l$ and deg $z=2l, a_0,a_1,a_2 \in \mathbb{Z}_2, m=l<ln, n$ odd. As $G$ act freely on $X,$ by Proposition \ref{mac}, $H^*(X_G)\cong H^*(X/G).$  This realizes possibility (1).\\
		Next, we consider  $l<m<ln<2m,$ where $m\equiv 0$ (mod $l$). Note that $E_2$-term of the Lerey-Serre spectral sequence is same as the Case (ii) of Theorem \ref{3.10}. By Proposition \ref{prop 5}, we have $bg^*(b)=a^{\frac{m}{l}}b$ is a permanant cocycle. As $G$ acts freely on $X,$  we must have $d_{l+1}(1\otimes a)=t^{l+1}\otimes 1.$ Consequently, $d_{l+1}(1\otimes a^{m-l})=t^{l+1}\otimes a^{m-2l}$ if $m-l \not \equiv 0 $ (mod $2l$). If $m-l  \equiv 0 $ (mod $2l$), then two lines survives to infinity which is  not possible. Thus, 	$d_{l+1}(1\otimes a^{n+2-{\frac{m}{l}}}b)=t^{l+1}\otimes a^{n+1-{\frac{m}{l}}}b.$ Consequently, $d_{l+1}(1\otimes a^nb)=t^{l+1}\otimes a^{n-1}b.$ As $2m>ln$ and $ a^{\frac{m}{l}}b$ is permanent cocycle,  we get $a^{\frac{m}{l}}b=a^{n+j-{\frac{m}{l}}}b,$ where $0\leq j \leq \frac{m}{l}.$ This implies that $n$ must be odd. Thus, $E_{l+2}^{*,*}=E_{\infty}^{*,*},$ and hence  $E^{k,li}_{\infty}\cong \mathbb{Z}_2, 0 \leq k \leq l, 1\leq i(even)\leq \frac{m}{l}-2 ~\&~ i=n+j,  1\leq j(odd)\leq \frac{m}{l} -1;$ $E^{0,li}_{\infty}\cong \mathbb{Z}_2, $$\frac{m}{l}\leq i\leq n,$ and trivial otherwise. Thus, for $\mathbb{F}=\mathbb{R},$ the cohomology groups are given by   	\[
		H^j(X_G)=
		\begin{cases}
			\mathbb{Z}_2   &  0\leq  j \leq  n+m,\\
			0 & \text{otherwise.}
		\end{cases}
		\]
		For $\mathbb{F}=\mathbb{C},$ we have
		\[
		H^j(X_G)=
		\begin{cases}
			\mathbb{Z}_2   &  0\leq j \leq m-2, j\not = 2d+1, d \mbox{ ~odd}; m\leq j(\mbox{even})\leq 2n,\\
			\mathbb{Z}_2 &  2n+2 \leq j \leq 2n+m, 	 j	\not = 2n+2d+3, d \mbox{  ~odd},\\
			0 & \text{otherwise.}
		\end{cases}
		\]
		Let $x \in E_{\infty}^{1,0},$ $u \in E^{0,2l}_{\infty},$ $v_{k} \in E_{\infty}^{0,m+lk-l}, 1\leq k \leq \frac{ln-m+l}{l}$    and $s \in E_{\infty}^{0,ln+l}$ be the elements corresponding to the permanent cocycle  $t\otimes 1 \in  E_2 ^{1,0},$ $1\otimes a^2 \in E_2^{0,2l},$ $a^{\frac{m+lk-l}{l}} \in E_2^{0,m+lk-l},$ and   $1\otimes a^{n+1-\frac{m}{l}} \in E_2^{0,ln+l},$  respectively. Assume that $y\in H^{2l}(X_G)$ determines  $u$ such that $i^*(y)=a^2,$  $z\in H^{2l+l}(X_G)$ determines $v$  such that $i^*(z)=a^{n+1-\frac{m}{l}}$ and $w_{k}\in H^{m+lk-l}(X_G)$ determines $v_{k}$ such that $i^*(w_{k})=a^{\frac{m+lk-l}{l}}.$ Hence, we have $x^{l+1}=y^{\frac{m}{2l}}+a_0w_1=z^{2}=xw_{k}={w_kw_{k+i}}+a_{k,i}x^{dl}y^{\frac{2m-ln+lq}{2l}}z=0,$ where  $0\leq i\leq \frac{ln-m}{l},$ $-1\leq q(odd)\leq \frac{ln-m-l}{l} ~\&~ d=0,1.$ Also, if $\frac{l(n+2)-m}{l}<2k+i,$ then $a_{k,i}=0.$
		If $d=0,$ then $i$ is even and $q=2k+i-3.$ If $d=1,$ then $i$ is odd and $q=2k+i-4.$  Thus, the cohomology ring  $H^*(X_G)$ is given by  $${\mathbb{Z}_2[x,y,z,w_{k}]}/{<x^{l+1},y^{\frac{m}{2l}}+a_0w_1,z^{2},xw_{k},{w_kw_{k+i}}+a_{k,i}x^{dl}y^{\frac{2m-ln+lq}{2l}}z>},$$ where  deg $x=1,$ deg $y=2l,$ deg $z=ln+l,$ deg $w_{k}=m+l(k-1),$ $1\leq k\leq \frac{ln-m+l}{l}, a_0 ~\&~ {a_{k,i}}'s$ are in $\mathbb{Z}_2,$ $n$  odd and $m\equiv 0$ (mod $2l$).
		This realizes possibility (2).
	\end{proof}
	\begin{remark}
		If  $a_i=0, 0\leq i \leq 2$ in possibility (1) of the Theorem \ref{thm 4.2}, then $X/G\sim_2  (\mathbb{R}P^{2l} \vee \mathbb{S}^l) \times\mathbb{P}^{\frac{n-1}{2}}(2l).$  
	\end{remark}
	\begin{remark}
		In particular, for $\mathbb{F}=\mathbb{R}$ $\&~m=2,$ this result has been discussed in case (1) of Theorem 3.5 \cite{nontrivial}. Theorem \ref{thm 4.2} generalises this result for all $m \geq 1.$
	\end{remark}
	Finally, we discuss the cohomology ring of orbit spaces of free involutions on $X,$ when $\pi_1(B_G)$ acts trivially on $H^*(X),$ under some assumptions on the associated Lerey-Serre spectral sequence of Borel fibration $ X \hookrightarrow X_G \rightarrow B_G.$   
	\begin{theorem}\label{thm 4.3}
		Let $G=\mathbb{Z}_2$ act freely on finite CW-complex $X\sim_2\mathbb{F}P^n\times \mathbb{S}^m,$ where $\mathbb{F}=\mathbb{R}$ or $\mathbb{C}$ and $n,m\geq 1.$ Assume that $\pi_1(B_G)$ acts trivially on $H^*(X)$ and the differentials $d_r(1\otimes b)=0~\forall~ r\leq m.$ Then, the  cohomology ring of orbit space $H^*(X/G)$ is isomorphic to one of the following  graded commutative algebras: 	\begin{enumerate}
			\item $\mathbb{Z}_2[x,y,z]/I,$ where $I$ is homogeneous ideal given by:
			\begin{center}
				$<x^{l+1},y^{\frac{n+1}{2}}+a_0y^{\frac{l(n+1)-m}{2l}}z+a_1x^{l}y^{\frac{ln-m}{2l}}z+a_2z,z^{2}+a_3x^{2i}y^{\frac{m-i}{l}}+a_{4}x^{i'}y^{\frac{m-{i'}}{2l}}z>,$ 
			\end{center}
			where  deg $x=1,$ deg $y=2l~\&$ deg $z=m;$ $a_0=0$ if $m \not\equiv 0 $(mod $2l$) or $m>ln+l;$ $a_1=0$ if $m \equiv 0 $(mod $2l$) or $m>ln;$ $a_2=0$ if $m \neq l(n+1);$
			$a_3=0$ if $m \not \equiv i$(mod $l$) or  \{$i=0$ and $2m > l(n-1)\}, 0\leq 2i\leq l$ and
			$ a_{4}=0$ if $m \not\equiv {i'} $(mod $2l$) or $m>ln, 0\leq {i'}\leq l,$  
			$a_k\in \mathbb{Z}_2, 0\leq k\leq 4,$  $n$ odd,	\vspace{0.5em}
			\item $\mathbb{Z}_2[x,y,z]/<x^{l+1},y^{\frac{n}{2}+1},z^2+a_0y+a_1x^lz>,$ where  deg $x=1,$ deg $y=2l~\&$ deg $z=m,$ $a_0,a_1\in \mathbb{Z}_2,$ $n$  even, and

			\vspace{0.5em}
			\item  $\mathbb{Z}_2[x,y]/<x^{m+1},y^{n+1}+{\sum_{0<i\equiv 0 (mod ~l)}^{min \{l(n+1),m\}}}a_ix^{i}y^{\frac{l(n+1)-i}{l}}>,$ 
			where deg $x=1,$ deg $y=l$ and $a_i\in \mathbb{Z}_2,$   
		\end{enumerate}
		where	$l=1$ or  $2$ for $\mathbb{F}=\mathbb{R}$ or  $\mathbb{C},$ respectively.
		
	\end{theorem}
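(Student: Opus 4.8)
The plan is to run the Leray--Serre spectral sequence of the Borel fibration $X\stackrel{i}\hookrightarrow X_G\stackrel{\pi}\to B_G$ with $\mathbb{Z}_2$-coefficients and then to resolve the resulting multiplicative extensions. Since $G$ acts freely, Proposition~\ref{mac} identifies $X_G$ with $X/G$, and Proposition~\ref{prop 4.5} gives $H^i(X_G;\mathbb{Z}_2)=0$ for $i>ln+m$. As $\pi_1(B_G)$ acts trivially on $H^*(X)$, the $E_2$-page is $E_2^{p,q}=\mathbb{Z}_2[t]\otimes\big(\mathbb{Z}_2[a,b]/\langle a^{n+1},b^2\rangle\big)$ with $\deg t=1$, $\deg a=l$ and $\deg b=m$. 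Finite cohomological dimension of $X_G$ forces the tower $t,t^2,t^3,\dots$ to be truncated, so some power of $t$ must be a boundary; by the Leibniz rule and degree considerations the only possibilities are $d_{l+1}(1\otimes a)=t^{l+1}\otimes 1$ and $d_{m+1}(1\otimes b)=t^{m+1}\otimes 1$ (the hypothesis $d_r(1\otimes b)=0$ for $r\le m$ ensures $1\otimes b$ survives to the page $E_{m+1}$, while no differential on $1\otimes a$ can be nonzero before page $l+1$). This produces the main case split.

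First, suppose $d_{l+1}(1\otimes a)\neq 0$, so $d_{l+1}(1\otimes a)=t^{l+1}\otimes 1$ and hence $d_{l+1}(1\otimes a^j)=j\,t^{l+1}\otimes a^{j-1}$, which is nonzero exactly when $j$ is odd. One checks that $1\otimes b$ is then a permanent cocycle (the only subtlety, when $l=m$, is resolved by choosing the degree-$l$ generator appropriately) and that every column $p\ge l+2$ collapses on this page; thus $E_{l+2}=E_\infty$ is concentrated in the columns $0\le p\le l$, the survivors being $t^p$ together with $t^p\otimes a^{2j}$ and $t^p\otimes a^{2j}b$ for the admissible even exponents $2j$. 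Consequently $H^*(X_G)$ is additively free over $\mathbb{Z}_2[x]/\langle x^{l+1}\rangle$, where $x$ lifts $t$, on lifts of the even powers of $a$ and their $b$-multiples; choosing $y\in H^{2l}(X_G)$ with $i^*(y)=a^2$ and $z\in H^m(X_G)$ with $i^*(z)=b$, the monomials $y^j$ and $y^jz$ realise these survivors. The only relations left to pin down are the expressions of $y^{(n+1)/2}$ (if $n$ is odd) or $y^{n/2+1}$ (if $n$ is even), and of $z^2$, modulo strictly higher filtration; these are exactly the correction terms in possibilities~(1) and~(2). A coefficient $a_k\in\mathbb{Z}_2$ can be nonzero only when the corresponding monomial has the matching total degree and lies in a nonzero bigraded piece of $E_\infty$, which is precisely the content of the stated vanishing conditions.

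Second, suppose $d_{l+1}(1\otimes a)=0$, so $1\otimes a$ is a permanent cocycle. Then the $t$-tower can only be truncated through $1\otimes b$, and finite-dimensionality forces $d_{m+1}(1\otimes b)=t^{m+1}\otimes 1$; by the Leibniz rule $d_{m+1}(1\otimes a^jb)=t^{m+1}\otimes a^j$, so the whole $b$-part and all $t^p$ with $p\ge m+1$ die, giving $E_{m+2}=E_\infty$ concentrated in the columns $0\le p\le m$ with fibre survivors $1,a,\dots,a^n$. Hence $H^*(X_G)$ is free over $\mathbb{Z}_2[x]/\langle x^{m+1}\rangle$ on $1,y,\dots,y^n$, where $x$ lifts $t$ and $y\in H^l(X_G)$ lifts $a$; the one remaining relation expresses $y^{n+1}\in\ker i^*$ in terms of the only classes of positive filtration in degree $l(n+1)$, namely $\sum_i a_i x^i y^{(l(n+1)-i)/l}$ with $i\equiv 0\pmod l$ and $0<i\le\min\{l(n+1),m\}$. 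This is possibility~(3), and the analysis is consistent with Theorem~\ref{3.10} in the borderline cases.

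The main obstacle is the extension-problem bookkeeping in the last two paragraphs: converting the relations $y^{(n+1)/2}=0$, $y^{n/2+1}=0$, $z^2=0$, $y^{n+1}=0$ that hold on $E_\infty$ into honest relations in $H^*(X_G)$, and determining exactly which correction coefficients can survive under which arithmetic constraints on $l,m,n$. This amounts to comparing the degree of each candidate correction monomial with the nonzero bigraded pieces of $E_\infty$ and checking compatibility with the relations already established and with graded-commutativity; it is elementary but is the source of all the case-splitting in the statement. Everything else --- the dichotomy, the two collapses $E_{l+2}=E_\infty$ and $E_{m+2}=E_\infty$, and the additive structure --- follows routinely from the Leray--Serre spectral sequence together with Propositions~\ref{mac} and~\ref{prop 4.5}.
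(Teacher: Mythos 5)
Your overall strategy is the same as the paper's: run the Leray--Serre spectral sequence of $X\hookrightarrow X_G\to B_G$, use freeness plus Proposition \ref{prop 4.5} to force one of $d_{l+1}(1\otimes a)$, $d_{m+1}(1\otimes b)$ to be the transgression $t^{*}\otimes 1$, compute $E_\infty$ in each branch, and resolve the multiplicative extensions; your device of ``choosing the degree-$l$ generator appropriately'' when $l=m$ is exactly the paper's substitution $v=1\otimes(a+b)$, so your two-case split merges the paper's cases (1) and (2) without loss.

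There is, however, one concrete gap in your first branch. You assert that when $d_{l+1}(1\otimes a)=t^{l+1}\otimes 1$ every column $p\ge l+1$ dies, and you then allow both parities of $n$, writing down the relation for $y^{(n+1)/2}$ if $n$ is odd and for $y^{n/2+1}$ if $n$ is even. But $d_{l+1}(1\otimes a^j)=j\,t^{l+1}\otimes a^{j-1}$ hits only the \emph{even} powers $a^{j-1}$ with $j-1\le n-1$; if $n$ is even the top classes $t^p\otimes a^n$ and $t^p\otimes a^nb$ are cocycles that are never hit, so they survive in every column $p$, contradicting $H^i(X/G)=0$ for $i>ln+m$. (Equivalently, $0=d_{l+1}(1\otimes a^{n+1})=(n+1)\,t^{l+1}\otimes a^n$ already forces $n$ odd whenever $d_{l+1}(1\otimes a)\neq 0$.) This is precisely the observation the paper uses to attach ``$n$ odd'' to possibility (1) and to confine the $n$ even case to the $m=l$ situation of possibility (2); without it you cannot justify the parity conditions in the statement, and your description of $E_\infty$ is false for $n$ even. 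The rest of the bookkeeping you defer (which correction coefficients $a_k$ can be nonzero, by matching total degrees against the nonzero bigraded pieces of $E_\infty$) is indeed the routine part and is carried out in the paper exactly as you describe.
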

	\begin{proof}
		As $\pi_1(B_G)$ acts trivially on $H^*(X),$ the $E_2$- term of  Leray-Serre spectral sequence of the fibration $X \hookrightarrow X_G \rightarrow B_G$ is given by $$E_2^{k,i}=H^k(B_G) \otimes H^i(X).$$ Since $G$  acts freely on $X$ and   differentials $d_r(1\otimes b)=0~\forall~ r\leq m,$ at least one of differentials $d_{l+1}(1\otimes a)$ and $d_{m+1}(1\otimes  b)$ must be non zero.\\
		First, consider $d_{l+1}(1\otimes a) =t^{l+1}\otimes 1$ and  $d_{m+1}(1\otimes b) =0,$ we get $d_{l+1}(1\otimes a^{i})=0$ \& $d_{l+1}(1\otimes a^{i}b) =0,$  when $0 \leq i(even)\leq n$ and $d_{l+1}(1\otimes a^{i}) =t^{l+1}\otimes a^{i-1}~\&$ $d_{l+1}(1\otimes a^{i}b)=t^{l+1}\otimes a^{i-1}b,$ when $0 < i(odd)\leq n.$ Clearly, for $n$ even two lines of the spectral sequence survive to infinity which follows that $F \neq \emptyset,$ a contradiction. Thus, $n$ must be odd. Clearly, $E_{l+2}^{*,*}=E_{\infty}^{*,*}.$ So, we  get:\\
		If $m\geq ln$ or $m\not \equiv 0$ (mod $2l$), then $E_{\infty}^{k,i}\cong \mathbb{Z}_2$ for $0\leq k \leq l, 0 \leq i \equiv 0$ (mod $2l$) $ \leq l(n-1);$ $i=m+i'$ where $0\leq i'\equiv 0$ (mod $l$) $\leq l(n-1),$  and trivial otherwise.\\ If $m<ln$ and $m\equiv 0$ (mod ~$2l$), then $E_{\infty}^{k,i}\cong \mathbb{Z}_2$ for $0\leq k \leq l,1\leq i\equiv 0$ (mod $2l$) $<m;$ $i=l(n+i'), 1\leq i'(odd)\leq \frac{m}{l}-1;$    $E_{\infty}^{k,i}\cong \mathbb{Z}_2\oplus \mathbb{Z}_2 $ for $0 \leq k \leq l, m\leq i\equiv 0$ (mod $2l$) $ \leq l(n-1),$ and trivial otherwise. \\Consequently, if $m>ln,$ then for $\mathbb{F}=\mathbb{R},$ the cohomology groups of $X_G$ are given by
		\[
		H^j(X_G)=
		\begin{cases}
			\mathbb{Z}_2   &  0\leq  j \leq  n , m \leq j \leq n+m  \\
			0 & \text{otherwise,}
		\end{cases}
		\]
		and for $\mathbb{F}=\mathbb{C},$ we have
		\[
		H^j(X_G)=
		\begin{cases}
			\mathbb{Z}_2   &  0\leq j \leq 2n, j\not = 2d+1, d \mbox{ ~odd} \\
			\mathbb{Z}_2   &	j =m+i\leq 2n+m,  i\not= 2d+1,  d \mbox{ ~odd}\\
			0 & \text{otherwise.}
		\end{cases}
		\]
		
		Now, if $\{m<ln$ and $m\equiv 0 $(mod $2l$)\} or $m=ln$, then for $\mathbb{F}=\mathbb{R},$ the cohomology groups are given by   	\[
		H^j(X_G)=
		\begin{cases}
			\mathbb{Z}_2   &  0\leq  j <  m; n+1 \leq j \leq n+m\\
			\mathbb{Z}_2\oplus \mathbb{Z}_2   &  m\leq  j \leq  n\\
			
			0 & \text{otherwise,}
		\end{cases}
		\]
		and for $\mathbb{F}=\mathbb{C},$ we have
		\[
		H^j(X_G)=
		\begin{cases}
			\mathbb{Z}_2   &  0\leq j <m , j\not= 2d+1, d \mbox{ ~odd};  j=2n+i, i	\not= 2d+3, d \mbox{  ~odd}\\
			\mathbb{Z}_2 \oplus \mathbb{Z}_2 &  m \leq j \leq 2n, j \not= 2d+1,  d \mbox{ ~odd}	\\ 
			0 & \text{otherwise.}
		\end{cases}
		\]
		Next, if $m<ln$ and $m\not \equiv 0$(mod $2l$), then for $\mathbb{F}=\mathbb{R},$ cohomology groups are same as in the case when  $m<ln$ and $m\equiv 0 $(mod $2l$). Simialrly, for $\mathbb{F}=\mathbb{C}$  the cohomology groups are given by $H^j(X_G)\cong \bigoplus_{i'+j'=j} E_{\infty}^{i',j'},$ where $0\leq i'\leq 2,$  and $0\leq j'\equiv 0 $(mod $4$) $\leq 2(n-1); j'=m+i, 0\leq i\equiv 0$(mod $4$)$\leq 2(n-1).$\\
		Suppose that the elements corresponding to permanent cocycles $t \otimes 1 \in E_2^{1,0},$ $1 \otimes a^2\in E_2^{0,2l}$and $1 \otimes b\in E_2^{0,m}$ are $x\in   E_{\infty}^{1,0},$ $u \in E_{\infty}^{0,2l}$ and $v \in E_{\infty}^{0,m},$ respectively.  Thus, the total complex is given by $$\mbox{Tot} ~E_{\infty}^{*,*}\cong \mathbb{Z}_2[x,u,v]/<x^{l+1}, u^{\frac{n+1}{2}}, v^2>,$$
		where deg $x=1,$ deg $u=2l$ and deg $v=m.$\\ Let  $y\in H^{2l}(X_G)$  and   $z\in H^{m}(X_G)$ determines $u$ and $v$  such that $i^*(y) =a^2$ and $i^*(z)=b,$ respectively. Clearly, $x^{l+1}=0. $ We also have $y^{\frac{n+1}{2}}+a_0y^{\frac{l(n+1)-m}{2l}}z+a_1x^{l}y^{\frac{ln-m}{2l}}z+a_2z=0,$ where $a_0=0$ if $m \not\equiv 0 $(mod $2l$) or $m>ln+l;$ $a_1=0$ if $m \not \equiv 0 $(mod $2l$) or $m>ln,$  $a_2=0$ if $m\neq l(n+1),$ and $z^{2}+a_{3}x^{i'}y^{\frac{m-{i'}}{2l}}z+a_{4}x^{2i}y^{\frac{m-i}{l}}=0,$ where $0\leq i'\leq l~\&~ 0\leq 2i\leq l,$  $a_3=0$ if $m \not \equiv i$(mod $l$),  also for $i=0,$   $a_3=0$ if  $2m > l(n-1),$ and
		$ a_{4}=0$ if $m \not\equiv {i'} $(mod $2l$) or $m>ln,$  $a_k\in \mathbb{Z}_2, 0\leq k \leq 4.$  
		Hence, the cohomology ring $H^*(X_G)\cong {\mathbb{Z}_2[x,y,z]}/{<I>},$ where $I$ is homogeneous ideal given by  \begin{center}
			$<x^{l+1},y^{\frac{n+1}{2}}+a_0y^{\frac{l(n+1)-m}{2l}}z+a_1x^{l}y^{\frac{ln-m}{2l}}z+a_2z,z^{2}+a_3x^{2i}y^{\frac{m-i}{l}}+a_{4}x^{i'}y^{\frac{m-{i'}}{2l}}z>,$ 
		\end{center}
		
		where   deg $x=1,$ deg $y=2l$ and  deg $z=m,~n $ odd.  This realizes possibility (1).\\
		Now, consider $d_{l+1}(1\otimes a) =d_{l+1}(1\otimes b) =t^{l+1}\otimes 1.$ So, $m=l.$ We get $d_{l+1}(1\otimes a^i)=0$ ~\&~ $d_{l+1}(1\otimes a^ib)= t^{l+1}\otimes a^i,$ when $0\leq i(even)\leq n$ and $d_{l+1}(1\otimes a^i)=t^{l+1}\otimes a^{i-1}$ ~\&~ $d_{l+1}(1\otimes a^ib)= t^{l+1}\otimes (a^i+a^{i-1}b),$ when $0<i(odd)\leq n.$ Thus,  $E_{l+2}^{*,*}=E_{\infty}^{*,*}.$ So, we have  $E_{\infty}^{k,i}\cong \mathbb{Z}_2$ for $0\leq k \leq l, 0 \leq i \equiv 0$ (mod $l$) $ \leq ln.$ Consequently, the cohomology groups are  given by   	
		\[
		H^j(X_G)=
		\begin{cases}
			\mathbb{Z}_2   &  j=0, ln+l; 0 <j\not \equiv 0 ~(mod ~l) <ln+l\\
			\mathbb{Z}_2\oplus \mathbb{Z}_2   &  0<  j  \equiv 0 ~(mod ~l) <ln+l\\
			
			0 & \text{otherwise.}
		\end{cases}
		\]
		
		Let  $x\in   E_{\infty}^{1,0},$ $u \in E_{\infty}^{0,2l},$ and $v \in E_{\infty}^{0,l}$ be elements corresponding to permanent cocyles $t \otimes 1 \in E_2^{1,0},$ $1 \otimes a^2\in E_2^{0,2l}$ and $1 \otimes (a+b)\in E_2^{0,l},$ respectively. For $n$ is odd, the total complex  Tot$E_{\infty}^{*,*}$ is same as in case (1). For $n$  even, $$\mbox{Tot} ~E_{\infty}^{*,*}\cong \mathbb{Z}_2[x,u,v]/<x^{l+1}, u^{\frac{n}{2}+1}, v^2>,$$
		where deg $x=1,$ deg $u=2l$ and deg $v=m.$\\ Let $y\in H^{2l}(X_G)$ and   $z\in H^{l}(X_G)$ determines  $u$  and $v$ such that $i^*(y)=a^2$  and $i^*(z)=a+b,$ respectively. For $n$ odd, the  cohomology ring of $X_G$ is same as in case (1) for $m=l.$ For $n$ even, we have  $x^{l+1}=y^{\frac{n}{2}+1}=z^2+a_0y+a_2x^{l}z=0, a_0,a_1, \in \mathbb{Z}_2.$  Thus, for $n$ even,  the cohomology ring is given by 
		$${\mathbb{Z}_2[x,y,z]}/{<x^{l+1},y^{\frac{n}{2}+1},z^2+a_0y+a_2x^{l}z>},$$ where  deg $x=1,$ deg $y=2l,$ deg $z=m$ and $a_0,a_1, \in \mathbb{Z}_2.$  This realizes possibility (2).\\
		Finally, consider $d_{l+1}(1\otimes a) =0 $ and  $d_{m+1}(1\otimes b) =t^{m+1}\otimes 1.$ We get, $d_{m+1}(1\otimes a^ib)=t^{m+1}\otimes a^i, 0<i\leq n.$ So, $E_{m+2}^{*,*}= E_{\infty}^{*,*},$ and hence $E_{\infty}^{k,i}\cong \mathbb{Z}_2, 0 \leq k \leq m ~\&~ 0 \leq i \equiv 0 $(mod $l$) $\leq ln.$ Thus, the cohomology groups of $X_G$ for $\mathbb{F}=\mathbb{R}$ are given by 
		\[H^j(X_G)=
		\begin{cases}
			\oplus	\mathbb{Z}_2(i+1)\mbox{copies}   &  j=i~\& ~j=n+m-i, 0 \leq i < min\{n,m\} \\
			\oplus \mathbb{Z}_2(min\{n,m\}+1) \mbox{copies}   & min\{n,m\}\leq j \leq n+m- min\{n,m\}\\
			
			0 & \text{otherwise.}
		\end{cases}
		\]

		Similarly, for $\mathbb{F}=\mathbb{C}$ the cohomology groups are given by $H^j(X_G)\cong \bigoplus_{i'+j'=j} E_{\infty}^{i',j'},$ where $0\leq i' \leq m,$ $0\leq j'\equiv 0$(mod $2$) $\leq 2n.$  Let $x\in E^{1,0}_{\infty}$ and $u \in E^{0,l}_{\infty}$   be the elements corresponding to permanent cocycles $t\otimes 1\in E_2^{1,0} $ and  $1 \otimes a \in E_2^{0,l},$ respectively.  Thus the total complex is given by  Tot$E_{\infty}^{*,*}\cong {\mathbb{Z}_2[x,y]}/{<x^{m+1},u^{n+1}>},$ where deg $x=1$ and deg $u=l.$ Now choose $y\in H^{l}(X_G)$ corresponding to $u$ such that $i^*(y)=a.$ Thus, we have $x^{m+1}=y^{n+1}+\sum_{0<i\equiv 0 (mod ~l)}^{min \{l(n+1),m\}}a_ix^{i}y^{\frac{l(n+1)-i}{l}}=0,$ where $a_i's$ are in $ \mathbb{Z}_2.$ Hence, the cohomology ring $H^*(X_G)$ is given by 	$${\mathbb{Z}_2[x,y]}/{<x^{m+1},y^{n+1}+\sum_{0<i\equiv 0 (mod ~l)}^{min \{l(n+1),m\}}a_ix^{i}y^{\frac{l(n+1)-i}{l}}>}$$ 
		where  deg $x=1$ and deg $y=l.$ This realizes possibility (3).
	\end{proof}
	\begin{remark}
		If  we take $a_i=0~\forall~ 0\leq i \leq 4,$ in possibility (1) of Theorem \ref{thm 4.3}, then $X/G\sim_2 \mathbb{R}P^l\times \mathbb{P}^{\frac{j}{2}}(2l) \times \mathbb{S}^m, j=n-1$ for $n$ odd and $j=n$ for $n$ even. If $a_i=0~ \forall~ 0<i\equiv 0(mod~l)\leq min \{l(n+1),m\}$ in the possibility (2), then $X/G\sim_2 \mathbb{R}P^m \times \mathbb{F}P^n,$ where $\mathbb{F}=\mathbb{R}$ or $\mathbb{C}.$  
	\end{remark}
	\begin{example}
		Let $T: \mathbb{F}P^n\rightarrow \mathbb{F}P^n$ be a map define as $[z_0,z_1,\cdots, z_{n-1},z_n] \mapsto [-\bar{z}_1,\bar{z}_0,\cdots, -\bar{z}_n,\bar{z}_{n-1}],$ when $n$ is odd.  Then, this gives a free involution on $ \mathbb{F}P^n,$ for $\mathbb{F}=\mathbb{R}$ and $\mathbb{F}=\mathbb{C}.$ The orbit spaces of these actions are  $\mathbb{R}P^n/\mathbb{Z}_2 \sim_2 \mathbb{S}^1\times \mathbb{C}P^{\frac{n-1}{2}} $ and  $\mathbb{C}P^n/\mathbb{Z}_2 \sim_2 \mathbb{S}^1\times \mathbb{H}P^{\frac{n-1}{2}}$ (\cite{hsingh}). The diagonal action define as $([z],x)\mapsto (T([z]),x)$ gives free involution on $\mathbb{F}P^n \times  \mathbb{S}^m.$   The orbit spaces are  $(\mathbb{R}P^n \times  \mathbb{S}^m)/\mathbb{Z}_2 \sim_2 \mathbb{S}^1\times \mathbb{C}P^{\frac{n-1}{2}} \times  \mathbb{S}^m$ and $(\mathbb{C}P^n \times  \mathbb{S}^m)/\mathbb{Z}_2 \sim_2 \mathbb{S}^1\times \mathbb{H}P^{\frac{n-1}{2}} \times \mathbb{S}^m,$  for $\mathbb{F}=\mathbb{R}$ and $\mathbb{F}=\mathbb{C},$ respectively. This realizes possibility (1) of Theorem \ref{thm 4.3}, for $a_k=0~\forall~ 0\leq k\leq 4.$ 
		
	\end{example}
	\begin{example}
		A map $\phi: \mathbb{F}P^n \times  \mathbb{S}^m \rightarrow \mathbb{F}P^n \times  \mathbb{S}^m, $ define as $([z],x)\mapsto ([z],-x)$ forms a free involution. Thus, the orbit space of this diagonal action is $(\mathbb{F}P^n \times  \mathbb{S}^m)/\mathbb{Z}_2 \sim_2 \mathbb{F}P^n \times  \mathbb{R}P^m.$ This realizes possibility (3), when $a_i=0~\forall ~i.$ The Dold manifold $D(n,m)$ is the orbit space of free involution on $\mathbb{C}P^n \times  \mathbb{S}^m $ define by  $([z],x)\mapsto ([\bar{z}],-x)$ \cite{peltier}. Recall that $H^*(D(n,m)) \cong  \mathbb{Z}_2[x,y]/<x^{m+1},y^{n+1}>,$ where deg $x=1,$ deg $y=2.$ This realizes possibility (3) of Theorem \ref{thm 4.3},  for $\mathbb{F}=\mathbb{C}$ and $a_i=0~\forall ~i.$
	\end{example}
	
	\section{Applications}
	In this section,  we derive the Borsuk-Ulam type results for free involutions on $X \sim_2 \mathbb{F}P^n \times \mathbb{S}^m ,$  where    $\mathbb{F}=\mathbb{R}$ or $\mathbb{C}.$ We determine the nonexistence of $\mathbb{Z}_2$-equivariant maps between  $ X$ and $ \mathbb{S}^{k},$ where $\mathbb{S}^k$ equipped with  antipodal actions.\\
	\indent Recall that \cite{floyd} the index (respectively, co-index) of a $G$-space $X$ is the greatest integer $k$ (respectively, the lowest integer $k$)  such that there exists a $G$-equivariant map $\mathbb{S}^k \rightarrow X$ (respectively, $ X \rightarrow  \mathbb{S}^k$).\\
	\indent	By  Theorems \ref{thm 4.2} and  \ref{thm 4.3}, we get the largest integer  $s$ for which $w^s \neq 0$ is one of $l, 2l$ or $m,$ where $l=1$ or $2$ for $\mathbb{F}=\mathbb{R}$ or $\mathbb{C},$ respectively and $w \in H^{1}(X/G)$ is the  characteristic class of the principle $G$-bundle $G \hookrightarrow X \rightarrow X/G.$ We have index$(X)\leq s$ \cite{floyd}. Thus, we have following Result:
	\begin{theorem}
		Let $G=\mathbb{Z}_2$ act freely on $X \sim_2 \mathbb{F}P^n \times \mathbb{S}^m,$ where $n,m\geq 1.$ Then, there does not exist $G$-equivariant map from $\mathbb{S}^k \rightarrow X,$ for $k > s,$ where $s$ is one of $l,2l$ or $m,$ and $l=1$ or $2$ for $\mathbb{F}=\mathbb{R}$ or  $\mathbb{C},$ respectively.
	\end{theorem}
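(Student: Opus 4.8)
The plan is to read off the height of the characteristic class of the principal bundle $G\hookrightarrow X\to X/G$ from the ring computations of Theorems \ref{thm 4.2} and \ref{thm 4.3}, and then to invoke Floyd's index estimate \cite{floyd}, as already indicated in the paragraph preceding the statement.

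First I would recall, via Proposition \ref{mac}, that $X/G\simeq X_G$ and that the characteristic class $w\in H^1(X/G;\mathbb{Z}_2)$ of the double cover is the pullback $\pi^*(t)$ of the generator $t\in H^1(B_G;\mathbb{Z}_2)$; in the notation of the proofs of Theorems \ref{thm 4.2} and \ref{thm 4.3} this is precisely the degree-one generator $x$ coming from the permanent cocycle $t\otimes 1\in E_2^{1,0}$. Since $w^k$ is carried by $(t\otimes 1)^k=t^k\otimes 1\in E_\infty^{k,0}$, and in each of the five computations the bottom row $E_\infty^{k,0}$ equals $\mathbb{Z}_2$ for $0\le k\le s$ and vanishes for $k>s$ (equivalently, each of the listed presentations of $H^*(X/G;\mathbb{Z}_2)$ contains the relation $x^{s+1}=0$ while no relation annihilates a smaller power of $x$), one gets $w^s\neq 0$ and $w^{s+1}=0$, with $s=l$ in case (2) of Theorem \ref{thm 4.2} and in cases (1) and (2) of Theorem \ref{thm 4.3}, $s=2l$ in case (1) of Theorem \ref{thm 4.2}, and $s=m$ in case (3) of Theorem \ref{thm 4.3}. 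Thus $s\in\{l,2l,m\}$.

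Next I would argue by contradiction. Suppose a $G$-equivariant map $f\colon\mathbb{S}^k\to X$ exists for some $k>s$, where $\mathbb{S}^k$ carries the antipodal involution. Then $\mathrm{index}(X)\ge k$ by the definition of the index. On the other hand, since the height of $w$ is $s$, the standard estimate of \cite{floyd} gives $\mathrm{index}(X)\le s<k$, a contradiction. Hence no such $f$ exists, which is the assertion of the theorem.

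Since the substantial work is already contained in Theorems \ref{thm 4.2} and \ref{thm 4.3}, I do not expect a genuine obstacle here; the only point needing attention is the presentation-by-presentation check that $x^s\neq 0$ and $x^{s+1}=0$, and this is immediate from the descriptions of the $E_\infty$ bottom rows obtained in those proofs.
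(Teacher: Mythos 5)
Your proposal is correct and takes essentially the same approach as the paper: the paper's proof consists of the paragraph preceding the statement, which reads off the height $s$ of the characteristic class $w$ from the ring presentations in Theorems \ref{thm 4.2} and \ref{thm 4.3} and then applies the Conner--Floyd estimate $\mathrm{index}(X)\le s$. Your case-by-case identification of $s$ as $l$, $2l$, or $m$ matches what those presentations give.
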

	Recall that the Volovikov's index $i(X)$ is the smallest integer $r\geq 2$ such that $d_r: E_r^{k-r,r-1} \rightarrow E_r^{k,0}$ is nontrivial for some $k,$ in the Leray-Serre spectral sequence of the Borel fibration  $ X \stackrel{i} \hookrightarrow X_G \stackrel{\pi} \rightarrow B_G.$ By Theorems \ref{thm 4.2} and \ref{thm 4.3}, we get $i(X)$ is  $ l+1 ,2l+1$ or $m+1,$ where $l=1$ or $2$ for $\mathbb{F}=\mathbb{R}$ or $\mathbb{C},$ respectively. By taking $Y=\mathbb{S}^{k}$ in Theorem 1.1 \cite{co}, we have 
	\begin{theorem}
		Let $G=\mathbb{Z}_2$ act freely on a finite CW-complex $X \sim_2 \mathbb{F}P^n \times  \mathbb{S}^m,$ where $ \mathbb{F}=\mathbb{R}$ or  $\mathbb{C}.$ Then, there is no $G$-equivariant map $f: X \rightarrow \mathbb{S}^{k}$  if $1\leq k < i(X)-1,$ where $i(X)=l+1,2l+1$ or $m+1,$ and  $l=1$ or $2$ for $\mathbb{F}=\mathbb{R}$ or $\mathbb{C},$ respectively.
		
	\end{theorem}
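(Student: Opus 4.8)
The plan is to deduce the statement from the monotonicity of the Volovikov index under $G$-equivariant maps, which is precisely the content of Theorem 1.1 of \cite{co} specialized to $Y=\mathbb{S}^{k}$. Two ingredients are needed: the value of $i(\mathbb{S}^{k})$ for the antipodal action, and the value of $i(X)$, which has in effect already been determined in Section 4.

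First I would compute $i(\mathbb{S}^{k})$. Since $G=\mathbb{Z}_2$ acts freely on $\mathbb{S}^{k}$, Proposition \ref{mac} gives $(\mathbb{S}^{k})_{G}\simeq \mathbb{S}^{k}/G=\mathbb{R}P^{k}$, so the Leray--Serre spectral sequence of $\mathbb{S}^{k}\hookrightarrow (\mathbb{S}^{k})_{G}\to B_{G}$ has $E_{2}^{p,q}=H^{p}(B_{G})\otimes H^{q}(\mathbb{S}^{k})$, which is nonzero only in the rows $q=0$ and $q=k$. Hence the only differential that can be nontrivial is $d_{k+1}\colon E_{k+1}^{0,k}\to E_{k+1}^{k+1,0}$, and it must be nontrivial, for otherwise $E_{\infty}$ would be infinite-dimensional, contradicting $H^{*}(\mathbb{R}P^{k};\mathbb{Z}_2)=\mathbb{Z}_2[t]/\langle t^{k+1}\rangle$. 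Therefore $i(\mathbb{S}^{k})=k+1$.

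Next I would read off $i(X)$ from the proofs of Theorems \ref{thm 4.2} and \ref{thm 4.3}: there it is shown that the first nontrivial differential in the spectral sequence of the Borel fibration $X\hookrightarrow X_{G}\to B_{G}$ hitting the bottom row $E_{r}^{*,0}$ is $d_{l+1}$, $d_{2l+1}$, or $d_{m+1}$, according to the structure of the $G$-action (whether $d_{l+1}(1\otimes a)\neq 0$, whether $ag^{*}(a)$ is a permanent cocycle forcing transgression only at stage $2l+1$, or whether instead $d_{l+1}(1\otimes a)=0$ and $d_{m+1}(1\otimes b)\neq 0$). Hence $i(X)\in\{\,l+1,\,2l+1,\,m+1\,\}$, with $l=1$ or $2$ according as $\mathbb{F}=\mathbb{R}$ or $\mathbb{C}$.

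Finally, suppose a $G$-equivariant map $f\colon X\to\mathbb{S}^{k}$ exists. Applying Theorem 1.1 of \cite{co} with $Y=\mathbb{S}^{k}$ and using $i(\mathbb{S}^{k})=k+1$ forces $i(X)\le k+1$, i.e. $k\ge i(X)-1$, which contradicts the hypothesis $k<i(X)-1$; hence no such map exists. The argument involves no genuine computation; the only delicate point is bookkeeping, namely checking that the index convention in \cite{co} matches the one recalled just before the theorem and keeping track of which of the three values $i(X)$ takes for the given action. This matching of conventions, rather than any new estimate, is the real content of the verification.
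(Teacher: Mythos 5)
Your proposal matches the paper's argument: the paper likewise reads off $i(X)\in\{l+1,2l+1,m+1\}$ from the first nontrivial differential identified in the spectral-sequence analysis of Theorems \ref{thm 4.2} and \ref{thm 4.3}, and then concludes by applying Theorem 1.1 of \cite{co} with $Y=\mathbb{S}^{k}$. Your extra step of computing $i(\mathbb{S}^{k})=k+1$ and phrasing the conclusion as monotonicity of the Volovikov index is just an explicit unpacking of that citation, not a different route.
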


	\bibliographystyle{plain}

\end{document}